\documentclass[
final
,nomarks
]{dmtcs-episciences}
\usepackage[utf8]{inputenc}
\usepackage{amsmath}
\usepackage{amsfonts}
\usepackage{latexsym}
\usepackage{amssymb}
\usepackage{tikz}
\usetikzlibrary{arrows}
\usepackage{mathrsfs}
\usepackage{amsthm}
\usepackage{todonotes}
\usetikzlibrary{positioning}
\usepackage{enumerate}
\usepackage{stmaryrd}
\usepackage{xparse}
\usepackage{subcaption}

\newcommand{\set}[1][r]{\llbracket 1, #1 \rrbracket}
\newcommand{\comp}[2][\lambda]{(#1_1,\dots,#1_#2)}
\newcommand{\comptrois}{(\lambda_1,\lambda_2,\lambda_3)}
\newcommand{\compmoins}[1][t]{(\lambda_1,\dots,\lambda_{#1} - |s_{#1}|, \dots, \lambda_r)}
\newcommand{\compplus}[1][t]{(\lambda_1,\dots,\lambda_{#1} + |s_{#1}|, \dots, \lambda_r)}
\newcommand{\compenlever}[1][t]{(\lambda_1,\dots, \lambda_{#1-1},\lambda_{#1+1},\dots,\lambda_r)}

\newcommand{\perm}{\mathcal{S}}

\newcommand{\N}{\mathbb{N}}
\newcommand{\Z}{\mathbb{Z}}
\newcommand{\cycle}{f}

\newcommand{\DIET}{symmetric discrete interval exchange}
\newcommand{\DIETtrois}{symmetric discrete 3-interval exchange}

\newtheorem{definition}{Definition}
\newtheorem{exemple}{Example}
\newtheorem{theorem}{Theorem}
\newtheorem{lemma}{Lemma}

\newtheorem{corollary}{Corollary}

\providecommand{\keywords}[1]{\textbf{Keywords --} #1}
\author{M\'{e}lodie Lapointe}
\title{Number of Orbits of Discrete Interval Exchanges}
\affiliation{LaCIM, Universit\'e du Qu\'ebec \`a Montr\'eal, Canada}
\keywords{Interval exchanges, Rauzy induction, Raney tree, orbits}
\received{2018-11-1}
\revised{2019-4-29}
\accepted{2019-5-13}

\begin{document}
\publicationdetails{21}{2019}{3}{17}{4951}
\maketitle

\begin{abstract}
    A new recursive function on discrete interval exchange associated to a composition of length $r$, and the permutation $\sigma(i) = r -i +1$ is defined. Acting on a composition $c$, this recursive function counts the number of orbits of the discrete interval exchange associated to the composition $c$. Moreover, minimal discrete interval exchanges, \textit{i.e.} the ones having only one orbit, are reduced to the composition $(1,1)$ which label the root of the Raney tree. Therefore, we describe a generalization of the Raney tree using our recursive function.
\end{abstract}

\section{Introduction}

 An interval exchange is a map which cuts an interval into $r$ pieces and reorders them. 
These maps were introduced by Oseledec in 1966~\cite{O1966} to generalize circle rotations following some ideas of Arnold~\cite{A1963,A2004}.
The discrete analogue maps were introduced by Ferenczi and Zamboni in 2013~\cite{FZ2013} by replacing the interval by a discrete interval \textit{i.e.} a set of integers between $a$ and $b$.
A discrete interval exchange is called minimal if it has only one orbit.
In the same paper, Ferenczi and Zamboni give a bijection between minimal discrete interval exchange and a special subset of words called $\pi$-clustering words (see~\cite{FZ2013} for more details on $\pi$-clustering words).
A natural question arises from that bijection: Can we identify compositions associated with minimal discrete interval exchanges?

Some partial answers can be found in the literature when the pieces are reordered in reverse order.
It is well-known that a discrete interval exchange with 2 pieces is minimal if and only if its composition has coprime parts.
Moreover, a discrete interval exchange with 3 pieces $\comptrois$ is minimal if and only if $\gcd(\lambda_1 + \lambda_2, \lambda_2 + \lambda_3) = 1$ as shown by Pak and Redlich~\cite{PR2008}.
We propose a new recursive function on compositions which counts the number of orbits of a discrete interval exchange and agrees with the greatest common divisor recursion when $r =2$.
Other functions that coincide with the greatest common divisor when $r = 2$ are known. 
For example, Rauzy induction is a generalization of Euclid's algorithm which renormalized intervals exchange and was introduced by Rauzy in 1979~\cite{R1979}. Other examples are given by the multidimensional continued fraction algorithms (see~\cite{L2015,S2000}).  

This paper is organized as follows. 
In Section~\ref{sec:def}, a formal definition of symmetric discrete interval exchanges is presented.
The presentation of the recursive function counting the number of orbits is the main focus of the Section~\ref{sec:counting}.
In Section~\ref{sec:tree}, a generalization of the Raney tree where each circular composition appears is shown.
Finally, we describe the number of orbits of any symmetric discrete interval exchange with 3 pieces extending the result given  by Pak and Redlich~\cite{PR2008}. Moreover, we give the cyclic type of these discrete interval exchanges.

Note that the recursive function counting the number of orbits presented in Theorem~\ref{thm:algo}, Corollary~\ref{cor:cycle_type_3} giving the cyclic type of a discrete interval exchange of length $3$ and a slightly modified version of the tree of circular compositions were conjectured by Christophe Reutenauer~\cite{R2016}. 

After submission, we discovered that Karnauhova and Liebscher~\cite{KL2017} (2017) have given a similar algorithm as ours (Theorem~\ref{thm:algo}). They work on a geometric generalization of symmetric discrete interval exchanges, called bi-rainbow meanders. Their main contribution is a formula which counts the number of connected components of bi-rainbow meanders; this is the same as counting the number of orbits of symmetric discrete interval exchanges. They do not study the construction, nor the enumeration of connected bi-rainbow meanders, which corresponds in our language to minimal symmetric discrete interval exchanges. 

\section{Discrete interval exchange}~\label{sec:def}

An \emph{integer interval}, denoted by $\llbracket a,b\rrbracket$, is the set $[a,b] \cap \Z$ for some integers $a$ and $b$.
A \emph{composition} of $n$ is a finite sequence $\comp{r}$ of positive integers whose sum is $n$. 
A \emph{permutation} $\sigma$ of $\perm_r$ can be written as the \emph{word} $\sigma(1)  \sigma(2)  \dots  \sigma(r).$
Recall that the \emph{cycle notation} of a permutation $\sigma$ is written as the product of its orbits.
For example, the permutation written as the word $451326$, is written in cycle notation as $(1,4,3)(2,5)(6)$. 
The number of orbits of $\sigma$ is denoted by $\gamma(\sigma)$.
A permutation is \emph{circular} if it has exactly one orbit. 

We consider a particular case of \emph{discrete interval exchanges}, as they are defined in~\cite{FZ2013} (as they are defined in~\cite{FZ2013}: for $\pi \in S_r$, let $\pi(k) = r-k+1$ for all $k \in \set[r]$). 
\begin{definition}~\label{D:DIET}
    Let $\lambda = \comp{r}$ be a composition of $n$. 
    The associated \emph{symmetric discrete interval exchange} $T_{\lambda}$ is defined as follows:
    let $B_1,\dots,B_r$ be the subintervals of $\set[n]$ defined by
    $    B_i = \llbracket 1+ \sum_{j < i} \lambda_j ,  \sum_{j \leq i} \lambda_j \rrbracket;
    $
let 
    \begin{align*}
        s_i = \sum_{j > i} \lambda_j - \sum_{j < i} \lambda_j.
    \end{align*}
    Then for all $x \in B_i$, $T_{\lambda}(x) = x+s_i$.
    We call $s_i$ the $i$-th \emph{translation parameter} and $(s_1,\dots,s_r)$ the \emph{translation vector}.
\end{definition}

The symmetric discrete interval exchange $T_{\lambda}$ is a permutation on the set $\set[n]$.
For example, the symmetric discrete interval exchange $T_{(3,5,4,2)}$ is $(1,12,6,9,3,14,2,13)(4,7,10)(5,8,11)$ as shown in Figure~\ref{fig:DIET}. The subintervals of $\set[14]$ are $B_1 = \{1,2,3\}$, $B_2= \{4,5,6,7,8\}$, $B_3 = \{9,10,11,12\}$, $B_4 = \{13,14\}$ and the translation vector of $\set[14]$ is $s = (5+4+2,4+2-3,2-3-5,-3-5-4) = (11,3,-6,-12)$. 
A discrete interval exchange is \emph{minimal} if it has only one orbit.

The translation vector of a \DIET \ is strictly decreasing. Moreover,  
the inverse of a symmetric discrete interval exchange $T_{\comp{r}}$ is $T_{(\lambda_r,\dots, \lambda_1)}$. Evidently, they both have the same number of orbits. 

\begin{figure}
    \begin{center}
    \begin{tikzpicture}[scale=0.8]
        \foreach \x in {1,2,...,14}
        {
            \draw node (\x) at (\x,0) {$\x$};
        }
        \foreach \x in {1,2,...,14}
        {
            \draw node (\x) at (\x,-2) {$\x$};
        }
        \draw[rounded corners=8,thick,green] (0.6,-0.4) rectangle (3.4,0.4);
        \draw[rounded corners=8,thick,blue] (3.6,-0.4) rectangle (8.4,0.4);
        \draw[rounded corners=8,thick,red] (8.6,-0.4) rectangle (12.4,0.4);
        \draw[rounded corners=8,thick,orange] (12.6,-0.4) rectangle (14.4,0.4);
        \draw[rounded corners=8,thick,green] (11.6,-1.6) rectangle (14.4,-2.4);
        \draw[rounded corners=8,thick,blue] (6.6,-1.6) rectangle (11.4,-2.4);
        \draw[rounded corners=8,thick,red] (2.6,-1.6) rectangle (6.4,-2.4);
        \draw[rounded corners=8,thick,orange] (0.6,-1.6) rectangle (2.4,-2.4);
        \draw[thick,->,green] (2,-0.4) -- (13,-1.6);
        \draw[thick,->,blue] (6,-0.4) -- (9,-1.6);
        \draw[thick,->,red] (10.5,-0.4) -- (4.5,-1.6);
        \draw[thick,->,orange] (13.5,-0.4) -- (1.5,-1.6);
        \draw node at (2,0.7) {$B_1$};
        \draw node at (6,0.7) {$B_2$};
        \draw node at (10.5,0.7) {$B_3$};
        \draw node at (13.5,0.7) {$B_4$};
        \draw node at (13,-2.7) {$B_{\sigma(1)}$};
        \draw node at (9,-2.7) {$B_{\sigma(2)}$};
        \draw node at (4.5,-2.7) {$B_{\sigma(3)}$};
        \draw node at (1.5,-2.7) {$B_{\sigma(4)}$};
    \end{tikzpicture}  
    \end{center}
    \caption{The symmetric discrete interval exchange $T_{(3,5,4,2)}$}
    \label{fig:DIET}
\end{figure}
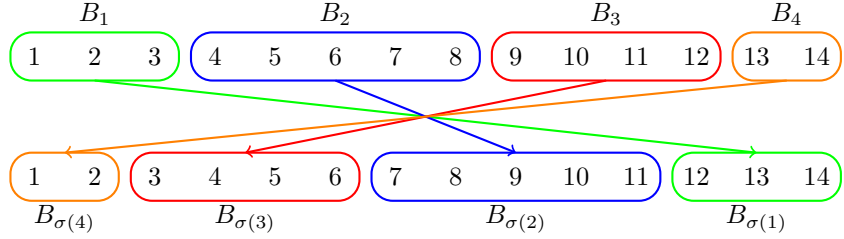

\section{Counting Orbits}~\label{sec:counting}

In this section, we define a recursive function on compositions that counts the number of orbits of a symmetric discrete interval exchange. In fact, the function subtracts the $i-$th translation parameter to $\lambda_i$. To ensure that the result is positive, we first prove that each composition has a part which is greater than its translation parameter.

\begin{lemma}~\label{l:existence}
    Let $\lambda = \comp{r}$ be a composition. There exists an integer $t$ in the set $\set$ such that $\lambda_t \geq |s_t|$.
\end{lemma}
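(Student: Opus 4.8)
The plan is to exploit the fact, already noted in the paper, that the translation vector is strictly decreasing, together with its behaviour at the two endpoints. First I would introduce the abbreviations $L_i = \sum_{j<i}\lambda_j$ and $R_i = \sum_{j>i}\lambda_j$, so that $s_i = R_i - L_i$ and $L_i + \lambda_i + R_i = n$. At the extremes one computes directly $s_1 = n - \lambda_1 = \lambda_2 + \cdots + \lambda_r$ and $s_r = \lambda_r - n = -(\lambda_1 + \cdots + \lambda_{r-1})$, whence $s_1 \geq 0$ and $s_r \leq 0$. The case $r = 1$ is immediate, since then $s_1 = 0 \leq \lambda_1$; so from now on I assume $r \geq 2$, in which case $s_1 > 0 > s_r$.

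Since the sequence $s_1 > s_2 > \cdots > s_r$ is strictly decreasing and changes sign, I would let $t$ be the largest index with $s_t \geq 0$. This $t$ exists because $s_1 > 0$, and it satisfies $t < r$ because $s_r < 0$, so that $s_{t+1}$ is defined and $s_{t+1} < 0$. The one computation I need is the telescoping difference of two consecutive translation parameters: from $L_{i+1} = L_i + \lambda_i$ and $R_{i+1} = R_i - \lambda_{i+1}$ one obtains
\begin{align*}
 s_t - s_{t+1} = \lambda_t + \lambda_{t+1}.
\end{align*}

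Now comes the decisive step. Because $s_t \geq 0$ and $s_{t+1} < 0$, we have $|s_t| = s_t$ and $|s_{t+1}| = -s_{t+1}$, and the identity above rewrites as
\begin{align*}
 |s_t| + |s_{t+1}| = \lambda_t + \lambda_{t+1}.
\end{align*}
From this I would conclude by contradiction: if both $\lambda_t < |s_t|$ and $\lambda_{t+1} < |s_{t+1}|$ held, then summing the two strict inequalities would give $\lambda_t + \lambda_{t+1} < |s_t| + |s_{t+1}| = \lambda_t + \lambda_{t+1}$, which is absurd. Hence at least one of $\lambda_t \geq |s_t|$ and $\lambda_{t+1} \geq |s_{t+1}|$ must hold, and either one supplies the index demanded by the statement.

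I do not expect a genuine obstacle here; the only real insight is to locate the sign change of the strictly decreasing translation vector and to observe that the two absolute values at that crossing add up \emph{exactly} to $\lambda_t + \lambda_{t+1}$, after which the result falls out of a one-line pigeonhole argument. The minor points to verify carefully are the boundary values $s_1 \geq 0$ and $s_r \leq 0$ and the degenerate case $r = 1$, all of which are routine.
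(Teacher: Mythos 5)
Your proof is correct, and it reaches the conclusion by a genuinely (if mildly) different route from the paper's. The paper chooses a single index $t$ at which the cumulative sums cross, namely one satisfying $\sum_{i<t}\lambda_i\le\sum_{i\ge t}\lambda_i$ and $\sum_{i\le t}\lambda_i\ge\sum_{i>t}\lambda_i$ (asserted to exist because the $\lambda_i$ are positive), and then isolates $\lambda_t$ in both inequalities to get $\lambda_t\ge|s_t|$ directly for that one $t$. You instead locate the sign change of the strictly decreasing translation vector, establish the telescoping identity $s_t-s_{t+1}=\lambda_t+\lambda_{t+1}$, rewrite it at the crossing as $|s_t|+|s_{t+1}|=\lambda_t+\lambda_{t+1}$, and conclude by pigeonhole that at least one of the two adjacent indices works. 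Both arguments rest on the same underlying fact (the partial sums cross $n/2$ somewhere), but they trade off differently: the paper's version exhibits a single explicit witness, while yours only yields a disjunction (one of $t$, $t+1$ works, without saying which); conversely, your version makes the existence of the crossing index fully explicit where the paper compresses it into a one-line assertion, and the exact identity $|s_t|+|s_{t+1}|=\lambda_t+\lambda_{t+1}$ also illuminates the Remark after the lemma, since it shows that if two adjacent indices both satisfy $\lambda_i\ge|s_i|$ then both inequalities are forced to be equalities. All the boundary checks in your write-up ($s_1\ge 0$, $s_r\le 0$, the case $r=1$) are correct.
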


\begin{proof}
   There exists an integer $t$ such that 
    \begin{align*}
        \sum_{i < t} \lambda_i \leq \sum_{i \geq t } \lambda_i & \text{ and } 
        \sum_{i \leq t} \lambda_i \geq \sum_{i > t} \lambda_i,
    \end{align*}
    since all $\lambda_i$ are positive integers.
    By isolating $\lambda_{t}$ in both inequalities, we obtain 
    \begin{align*}
        \sum_{i < t} \lambda_i - \sum_{i > t } \lambda_i \leq \lambda_t& \text{ and } 
        \lambda_t \geq  \sum_{i > t} \lambda_i - \sum_{i < t} \lambda_i ,
    \end{align*}
    
    We combine both inequalities to obtain  
    \begin{align*}
        \lambda_t \geq  \left| \sum_{i > t} \lambda_i - \sum_{i < t} \lambda_i \right|  = |s_t|,
    \end{align*}
    by Definition~\ref{D:DIET}.

\end{proof}

\textbf{Remark: }The value $t$ where $\lambda_t \geq |s_t|$ is not unique if $\lambda_t = |s_t|$.
For example, in the composition $(2,1,2,1)$ we have that $\lambda_2 \geq |s_2|$ and $\lambda_3 \geq |s_3|$.
However, one can show that there are at most two possible values. These properties are proven in Section~\ref{sec:tree}.

Let $t$ be the smallest integer belonging to the set $\set$ such that $\lambda_t\geq |s_t|$ where $(s_1,\dots,s_r)$ is the translation vector of $T_\lambda$.
Define a function $\cycle$ mapping from the set of compositions to the natural numbers as
\begin{equation}
    \cycle\comp{r} = \begin{cases}
                    \lambda_1 & \text{if } r = 1, \\
                    \lambda_t + \cycle\compenlever  & \text{if } |s_t| = 0,\\
                    \cycle\compenlever &  \text{if } \lambda_t = |s_t|, \text{ and}\\
                    \cycle\compmoins &   \text{otherwise}.\\
                    \end{cases}
    \label{eqn:formule}
\end{equation}
We show that $f$ counts the number of orbits of any \DIET  \ i.e. $f(\lambda) = \gamma(T_{\lambda})$.
\begin{exemple}
    The \DIET \ $T_{(3,5,4,2)}$ has $3$ cycles as shown previously and 
    \begin{align*}
        f(3,5,4,2) & = f(3,2,4,3) = f(3,2,1,2) \\
                   & = 2 + f(3,1,2) = 2 + f(1,2) = 2 + f(1,1) = 2 + f(1)\\ 
                   & = 3.
    \end{align*}
    The values of $t$ and the corresponding translation parameter at each step are  in Table~\ref{t:calcul_f}. 
\end{exemple}

\begin{table}
    \centering
    \begin{tabular}{c|c|c|c|c|c|c|c}
        $\lambda$ & $(3,5,4,2)$  & $(3,2,4,2)$ & $(3,2,1,2)$ & $(3,1,2)$  & $(1,2)$ & $(1,1)$ & $(1)$  \\
        \hline
        $t$ & 2 & 3 & 2 & 1 & 2 & 1 &  \\
        \hline
        $|s_t|$ & 3 & 3 & 0 & 3 & 1 & 1 &  \\
    \end{tabular}
    \caption{The first row gives the successive steps in the computation of $f(3,5,4,2)$, the second row gives the indices where the recursion for $f$ is applied on the composition and the third row gives the absolute value of the $t$-th translation parameter.}
    \label{t:calcul_f}
\end{table}

We say that
$\comp{r} \leq (\beta_1, \dots, \beta_s)$
if $r < s$ or $r = s$ and $\comp{r} \leq_{lex} (\beta_1, \dots, \beta_s)$, where $\leq_{lex}$ is the lexicographic order i.e. $\lambda_1 < \beta_1$ or $\lambda_1 = \beta_1$ and $(\lambda_2,\dots,\lambda_r) \leq_{lex} (\beta_2,\dots,\beta_s)$. 
One can check that the composition in the recursive step of $f$ is smaller than the composition.

The next few lemmas show that the operations used in the recursive step of the function $f$ are valid.

\begin{lemma}~\label{l:addcycles}
    If the value of $|s_t|$ is zero , then the \DIET \ $T_{\lambda}$ has
$$\lambda_t + \gamma(T_{\compenlever})$$
orbits.
\end{lemma}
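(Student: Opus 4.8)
The plan is to split $\set[n]$ into the block $B_t$, on which $T_{\lambda}$ acts as the identity, and its complement $\set[n] \setminus B_t$, on which $T_{\lambda}$ will be shown to be conjugate to $T_{\lambda'}$ where $\lambda' = \compenlever$. Since orbits are preserved under conjugation, and a block of fixed points contributes one singleton orbit per element, this yields $\gamma(T_{\lambda}) = \lambda_t + \gamma(T_{\compenlever})$, which is exactly the claim.

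First I would record the consequence of $|s_t| = 0$. Writing $L = \sum_{j<t}\lambda_j$, the hypothesis $s_t = 0$ means $\sum_{j>t}\lambda_j = \sum_{j<t}\lambda_j = L$, so $B_t = \llbracket L+1, L+\lambda_t \rrbracket$ and $T_{\lambda}(x) = x + s_t = x$ for every $x \in B_t$. Thus $B_t$ is fixed pointwise and contributes exactly $\lambda_t$ singleton orbits. Because $T_{\lambda}$ is a bijection fixing $B_t$, it restricts to a bijection of the complement, which is the disjoint union of the left part $\set[L]$ and the right part $\llbracket L+\lambda_t+1, n \rrbracket$, both of size $L$.

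Next I would exhibit the swap structure and the conjugacy. Since the translation vector is strictly decreasing and $s_t = 0$, we have $s_i > 0$ for $i < t$ and $s_i < 0$ for $i > t$; computing the image interval $T_{\lambda}(B_i) = \llbracket 1 + \sum_{j>i}\lambda_j, \sum_{j \geq i}\lambda_j \rrbracket$ shows that each $B_i$ with $i < t$ lands in the right part and each $B_i$ with $i > t$ lands in the left part. I then define $\phi : \set[n] \setminus B_t \to \set[n']$, with $n' = n - \lambda_t$, by $\phi(x) = x$ on the left part and $\phi(x) = x - \lambda_t$ on the right part; this is the bijection that deletes $B_t$ and slides the right part left by $\lambda_t$, and it carries the blocks $B_i$ onto the blocks of $\lambda'$ (blocks with index $> t$ being reindexed down by one). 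The core computation is to check $\phi \circ T_{\lambda} = T_{\lambda'} \circ \phi$ by comparing translation parameters: one verifies $s'_i = s_i - \lambda_t$ for $i < t$ and $s'_{i-1} = s_i + \lambda_t$ for $i > t$, where $s'$ is the translation vector of $\lambda'$, and that these offsets are exactly cancelled by the $\mp \lambda_t$ that $\phi$ applies according to whether $T_{\lambda}(x)$ lies in the right or the left part.

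I expect the main obstacle to be precisely this bookkeeping: tracking how the deletion of $\lambda_t$ reindexes the parts and shifts coordinates, and confirming case by case that the translation-parameter discrepancy between $\lambda$ and $\lambda'$ matches the coordinate shift introduced by $\phi$. Once the conjugacy identity is established, $T_{\lambda}$ restricted to $\set[n] \setminus B_t$ has the same number of orbits as $T_{\lambda'}$, namely $\gamma(T_{\compenlever})$, and adding the $\lambda_t$ fixed points of $B_t$ gives $\gamma(T_{\lambda}) = \lambda_t + \gamma(T_{\compenlever})$, as required.
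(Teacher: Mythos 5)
Your proposal is correct and follows essentially the same route as the paper's proof: both observe that $s_t=0$ makes $B_t$ a block of $\lambda_t$ fixed points, and then conjugate the restriction of $T_{\lambda}$ to the complement onto $T_{\lambda'}$ via the bijection that deletes $B_t$ and shifts the right part down by $\lambda_t$. The translation-parameter bookkeeping you single out ($s'_i = s_i - \lambda_t$ for $i<t$ and $s'_{i-1} = s_i + \lambda_t$ for $i>t$, cancelled by the shift applied to $T_{\lambda}(x)$) is exactly the computation carried out in the paper.
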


For example, $T_{(3,2,4,5)}$ has 5 orbits and its third translation parameter is $0$. Moreover, one can check that $T_{(3,2,5)}$ has one orbit, hence $\gamma(T_{(3,2,4,5)}) = 5 = 4 + \gamma(T_{(3,2,5)})$
as predicted.

\begin{proof}
    Let $\lambda = \comp{r}$ be a composition and $t$ an integer in $\set$ such that $s_t = 0$.
    Let $x$ be an integer in the set $\set[n]$. 
    By definition, the \DIET \ $T_{\lambda}$ is 
    \begin{align}~\label{eq:proof}
        T_{\lambda}(x) = x + s_i
    \end{align}
    where $x$ belongs to $B_i$  with $B_i = \llbracket 1 + \sum_{j < i} \lambda_j,\sum_{j \leq i } \lambda_j \rrbracket $ and $s_i = \sum_{j > i} \lambda_j - \sum_{j < i} \lambda_j$ as in Definition~\ref{D:DIET}.

    If $x \in B_t$, then Equation~\eqref{eq:proof} becomes $T_{\lambda}(x) = x$. Thus, each integer in $B_t$ is a fixed point of $T_{\lambda}$ and its orbit contains only itself. Thus $T_{\lambda}|B_t$ is the identity and $T_{\lambda}$ stabilizes $H = \set[n]\backslash B_t$. 
    
    We show that $T_{\lambda}|H$ is conjugated to $T_{\lambda'}$; that is , $T_{\lambda} | H = \omega^{-1}\circ T_{\lambda'} \circ \omega$, where $\omega$ is the unique decreasing bijection $H \rightarrow \set[n-\lambda_t]$. This implies that the number of orbits of $T_{\lambda} | H$ is equal to $\gamma(T_{\lambda'})$. 

    Suppose that $x \notin B_t$.
    Let $\lambda' = (\lambda'_1,\dots,\lambda'_{r-1})$ where $\lambda'_i = \lambda_i $ if $i < t$ and $\lambda'_i = \lambda_{i+1}$ otherwise.
    Note that $T_{\lambda'}$ is the \DIET \ where $B'_1,\dots, B'_{r-1}$ are subintervals of $\set[n - \lambda_t]$ defined by $B'_i = \llbracket 1 + \sum_{j < i} \lambda'_i, \sum_{j \leq i} \lambda'_i \rrbracket$. Its translation parameters are defined by $s'_i = \sum_{j > i} \lambda'_j - \sum_{j < i} \lambda'_j$.
    Then for all $x \in B'_i$, $T_{\lambda'}(x) = x + s'_i$.
    Note that $T_{\lambda'} = T_{\compenlever}$.

    First, we describe the sets $B_i$ using the sets $B'_i$. 
    We have two cases to consider. 

    If $i < t$, then 
    $$B_i = \left\llbracket 1 + \sum_{j < i} \lambda_j, \sum_{j \leq i} \lambda_j \right\rrbracket = \left\llbracket 1 + \sum_{j < i} \lambda'_j, \sum_{j \leq i} \lambda'_j \right\rrbracket = B'_i$$
    since $j \leq i < t$, we know that $\lambda_j = \lambda'_j$.

    Otherwise, if $i > t$, then 
    $$B_i = \left\llbracket 1  + \sum_{j < i} \lambda_j, \sum_{j \leq i} \lambda_j \right\rrbracket = \left\llbracket 1 + \lambda_t + \sum_{j < i} \lambda'_i, \lambda_t + \sum_{j \leq i} \lambda'_i \right\rrbracket = \{x + \lambda_t \mid x \in B'_i\},$$
    since $i > t$ and $\sum_{j < i} \lambda_j = \sum_{j < t} \lambda_j + \sum_{t < j < i} \lambda_j + \lambda_t  = \lambda_t + \sum_{i < j} \lambda'_i$.

    Secondly,
    we want to show that $T_{\lambda}(x)  = T_{\lambda'}(x) \pm \lambda_t$. Recall that $T_{\lambda}(x)= x + \sum_{j > i } \lambda_j - \sum_{j < i } \lambda_j$. We factorize $\lambda_t$ to obtain
    $$T_{\lambda}(x) = \begin{cases}
        x + \sum_{j > t } \lambda_j + \sum_{t > j > i }  \lambda_j- \sum_{j < i }  \lambda_j + \lambda_j & \text{ if } i < t \\
        x + \sum_{j > i } \lambda_j - \sum_{j < t }  \lambda_j- \sum_{t < j < i }  \lambda_j - \lambda_j & \text{ if } i > t.
        \end{cases}$$
        Replacing $\lambda_j$ by $\lambda'_j$ if $j < t$ and by $\lambda'_{j-1}$ otherwise gives 
    \begin{align*}
        T_{\lambda}(x) & = \begin{cases}
        x + \sum_{j > i }  \lambda'_j- \sum_{j < i }  \lambda'_j + \lambda_t  & \text{ if } i < t \\
        x + \sum_{j > i } \lambda'_j - \sum_{j < i }  \lambda'_j + \lambda_t  & \text{ if } i > t,
        \end{cases} \\
        & = \begin{cases}
        x + s_i + \lambda_t & \text{ if } i < t \\
        x + s_i  - \lambda_t & \text{ if } i > t.
        \end{cases}
    \end{align*}
   
    Hence, $T_{\lambda}|H$ is conjugated to $T_{\lambda'}$ and the number of orbits of $T_{\lambda}$ is $\lambda_t + \gamma(T_{\compenlever})$ as claimed. 
\end{proof}

\begin{lemma}~\label{l:diffcycles}
    Let $ \comp{r}$ be a composition of $n$. 
    The \DIET \ $T_{\comp{r}}$ and $T_{\compplus}$ have the same number of orbits. 
\end{lemma}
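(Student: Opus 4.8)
The plan is to realise $T_\mu$, where $\mu = \compplus$ is obtained from $\lambda$ by replacing $\lambda_t$ with $\lambda_t + |s_t|$, as a suspension of $T_\lambda$: the $|s_t|$ new points added to the block $B_t$ form a set that every orbit must cross, and the first-return map of $T_\mu$ to the complement recovers $T_\lambda$ up to an order-preserving relabelling. Since a first-return map to a set met by every cycle has exactly as many cycles as the original permutation, this will give $\gamma(T_\mu) = \gamma(T_\lambda)$. Write $d = |s_t|$; if $d = 0$ then $\mu = \lambda$ and there is nothing to prove, so assume $d \ge 1$. Because $s_t = \sum_{j>t}\lambda_j - \sum_{j<t}\lambda_j$ does not involve $\lambda_t$, it is unchanged when passing from $\lambda$ to $\mu$. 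Using the fact recalled before Lemma~\ref{l:existence} that $T_{\comp{r}}$ and its inverse $T_{(\lambda_r,\dots,\lambda_1)}$ have the same number of orbits, together with the observation that reversing the composition sends $s_t$ to $-s_t$ at the mirrored index (and preserves $|s_t|$), I would first reduce to the case $s_t \ge 0$, so that $d = s_t$, hence $\sum_{j<t}\lambda_j = L$ and $\sum_{j>t}\lambda_j = L+d$ where $L := \sum_{j<t}\lambda_j$.

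With this normalisation, $B_t = \llbracket L+1, L+\lambda_t\rrbracket$ for $\lambda$ while $B_t = \llbracket L+1, L+\lambda_t+d\rrbracket$ for $\mu$. I would let $E = \llbracket L+\lambda_t+1,\, L+\lambda_t+d\rrbracket$ be the block of new points, set $F = \set[n+d]\setminus E$, and let $\phi : \set[n] \to F$ be the order-preserving bijection given by $\phi(x) = x$ for $x \le L+\lambda_t$ and $\phi(x) = x+d$ for $x > L+\lambda_t$. A direct computation of the translation vector $s'$ of $T_\mu$ gives $s'_i = s_i + d$ for $i < t$, $s'_t = s_t = d$, and $s'_i = s_i - d$ for $i > t$. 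From $E \subseteq B_t$ and $T_\mu(E) = E + d$ one checks that $T_\mu(E) \subseteq F$; hence no cycle of $T_\mu$ lies entirely in $E$, every cycle of $T_\mu$ meets $F$, and the first-return map $\hat T$ of $T_\mu$ to $F$ is a well-defined permutation of $F$ whose cycles are in bijection with those of $T_\mu$, so $\gamma(T_\mu) = \gamma(\hat T)$. Moreover, since $T_\mu(E) \subseteq F$, reaching $F$ from any point of $F$ takes at most two applications of $T_\mu$.

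The heart of the argument is to verify $\hat T = \phi \circ T_\lambda \circ \phi^{-1}$, which I would carry out block by block using the identity $T_\mu(\phi(x)) = T_\lambda(x) + (s'_i - s_i) = T_\lambda(x) \pm d$. For $x \in B_i$ with $i < t$, the image $T_\lambda(x)$ lies in the rightmost $L$ positions of $\set[n]$, all of which exceed $L+\lambda_t+d$; here $T_\mu(\phi(x)) = T_\lambda(x)+d = \phi(T_\lambda(x))$ lands directly in $F$, so $\hat T$ needs a single step. For $x \in B_t$ and for $x \in B_i$ with $i > t$, the value $T_\lambda(x)$ may fall inside the numerical range occupied by $E$; in that event $T_\mu(\phi(x))$ lands in $E$, and a second application of $T_\mu$ adds another $d$ (because $E \subseteq B_t$), producing exactly $\phi(T_\lambda(x))$. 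The delicate bookkeeping — and the main obstacle — is precisely this split: showing that these two regimes are exhaustive, that the second step always returns to $F$, and that block $B_t$ (whose $T_\lambda$-image straddles the boundary $L+\lambda_t$) is handled correctly. This rests on the equalities $\sum_{j<t}\lambda_j = L$ and $\sum_{j>t}\lambda_j = L+d$ forced by $s_t = d$. Once $\hat T = \phi \circ T_\lambda \circ \phi^{-1}$ is established, conjugacy yields $\gamma(\hat T) = \gamma(T_\lambda)$, and combined with $\gamma(T_\mu) = \gamma(\hat T)$ we conclude $\gamma(T_\mu) = \gamma(T_\lambda)$, as claimed.
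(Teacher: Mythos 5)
Your proof is correct, but it runs the paper's argument in the opposite direction. The paper constructs the cycles of $T_{\mu}$ from those of $T_{\lambda}$ by a substitution $\psi_t$ acting on the orbits viewed as circular words: each of the $|s_t|$ new points is inserted next to an existing letter, the invariance principle invoked is that a substitution on a set of circular words preserves their number, and the identity $\psi_t(T_\lambda)=T_\mu$ is verified by matching circular factors of length $2$. You instead start from the larger permutation $T_\mu$, observe that the set $E$ of new points satisfies $T_\mu(E)\subseteq F$ so that every cycle meets $F=\llbracket 1,n+|s_t|\rrbracket\setminus E$, and identify the first-return map of $T_\mu$ to $F$ with $T_\lambda$ up to the order-preserving relabelling $\phi$; your invariance principle is that inducing on a set met by every cycle preserves the number of cycles. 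The two verifications involve the same bookkeeping (the shifted translation vector $s'_i=s_i\pm|s_t|$ and the same one-step/two-step dichotomy according to whether an image falls in $E$), so neither is materially easier, but your formulation is closer to the Rauzy-induction point of view mentioned in the introduction, and your preliminary reduction to $s_t\ge 0$ via the inverse $T_{(\lambda_r,\dots,\lambda_1)}$ cleanly halves the case analysis, where the paper merely asserts that the negative case is ``similar''. The claims you flag as delicate bookkeeping do check out: writing $L=\sum_{j<t}\lambda_j$, for $i<t$ one has $T_\lambda(x)\ge n-L+1=L+\lambda_t+|s_t|+1$, so a single step suffices and lands at $\phi(T_\lambda(x))$; for $i\ge t$ the image $T_\lambda(x)$ lies at or below $L+\lambda_t+|s_t|$, it falls in $E$ exactly when it exceeds $L+\lambda_t$, and in that case the extra application of $T_\mu$ (with translation $s'_t=|s_t|$, since $E\subseteq B_t$ for $\mu$) supplies precisely the shift needed to reach $\phi(T_\lambda(x))\in F$.
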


We begin by understanding how the orbits of $T_{\comp{r}}$ can be transformed into the orbits of $T_{\compplus}$. Through the cycle notation, a \DIET \ can be viewed as a set of circular words on the alphabet $\set[n]$. Therefore, a  substitution on $T_{\comp{r}}$ describes this operation:
\begin{align}
    \psi_t(x) =  \begin{cases} x & \mathrm{ if } \ x \in \set[k]\\
        x \cdot (x + |s_t|) & \mathrm{ if } \ x \in \llbracket k + 1, k + |s_t|\rrbracket \mathrm{\ and\ } s_t > 0\\
        (x + |s_t|)\cdot x & \mathrm{ if } \ x \in \llbracket k+1 , k + |s_t|\rrbracket \mathrm{\ and\ } s_t < 0\\\
        x + |s_t| & \mathrm{ otherwise } \\
    \end{cases}
    \label{def:morphism}
\end{align}
where $x \in \set[n]$ is a letter, $\cdot$ denotes concatenation and 
$$k = \begin{cases} \sum_{j\leq t} \lambda_j & \mathrm{ if \ } s_t \geq 0 \\
    \sum_{j < t} \lambda_j - |s_t| & \mathrm{ if } s_t < 0.
\end{cases}$$

For example, the \DIET \ of $T_{(5,1,2)}$ is $(1, 4, 7)(2, 5, 8)(3, 6)$ and the \DIET \ of $T_{(5,4,2)}$ is $(1, 7, 4, 10)(2, 8, 5, 11)(3, 9, 6)$. Let us apply $\psi_2$ on $T_{(5,1,2)}$. We have that $t = 2$, $s_t = -3 < 0$ and $k = 2$. For $T_{(5,1,2)}$, the substitution is 
\begin{align}
    \psi_2(x) =  \begin{cases} x & \mathrm{ if } \ x \in \set[2]\\
        (x + 3)\cdot x & \mathrm{ if } \ x \in \llbracket 3 , 5\rrbracket \\
        x + 3 & \mathrm{ otherwise. } \\
    \end{cases}
\end{align}
 
Hence $\psi_2(T_{(5,1,2)}) = \psi_2((1, 4, 7)(2, 5, 8)(3, 6))  = (1, 7, 4, 10)(2, 8, 5, 11)(6, 3, 9)$ as predicted.
\begin{lemma}~\label{l:morphism}
    Let $t$ be an integer in $\set[r]$.
        The identity
    $$\psi_t(T_{\comp{r}}) = T_{\compplus}$$
     holds.

\end{lemma}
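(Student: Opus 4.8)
The plan is to regard both $\psi_t(T_{\comp{r}})$ and $T_{\compplus}$ as permutations of the integer interval $\set[n+|s_t|]$, each presented through its successor function on circular words, and to show that these two successor functions agree letter by letter. (If $s_t=0$ the map $\psi_t$ is the identity and $\compplus=\comp{r}$, so the statement is trivial; assume $s_t\neq 0$.) Writing $\mu=\compplus$, a composition of $N=n+|s_t|$, with blocks $B''_i$ and translation parameters $s''_i$, I would first record, directly from Definition~\ref{D:DIET}, how the data of $\mu$ relate to those of $\comp{r}$:
\begin{align*}
    s''_i = s_i + |s_t| \ (i<t), \qquad s''_t = s_t, \qquad s''_i = s_i - |s_t| \ (i>t),
\end{align*}
together with $B''_i=B_i$ for $i<t$, the block $B''_t$ equal to $B_t$ enlarged by $|s_t|$ positions (at its right end if $s_t>0$, at its left end if $s_t<0$), and $B''_i=B_i+|s_t|$ for $i>t$. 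These formulas govern the whole comparison.

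Next I would check that $\psi_t(T_{\comp{r}})$ really is a permutation of $\set[N]$: reading off the three ranges in the definition of $\psi_t$, the images of the old letters tile $\set[N]$ with neither gap nor repetition, the doubled letters supplying exactly the $|s_t|$ new positions of $B''_t$. Hence each element of $\set[N]$ occurs exactly once among the circular words $\psi_t(T_{\comp{r}})$, and it suffices to identify successors. Every edge of $\psi_t(T_{\comp{r}})$ is internal or boundary. An \emph{internal} edge lies inside the image of a single doubled letter $x$; there is one per doubled letter, namely $x\to x+|s_t|$ when $s_t>0$ and $x+|s_t|\to x$ when $s_t<0$. In both cases the source lies in $B''_t$ and the edge is exactly $y\mapsto y+s''_t=y+s_t$, which is $T_\mu$ on the new positions, so internal edges match at once.

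A \emph{boundary} edge joins the last letter $\mathfrak l(a)$ of $\psi_t(a)$ to the first letter $\mathfrak f(b)$ of $\psi_t(b)$, where $b=T_{\comp{r}}(a)=a+s_i$ for $a\in B_i$; here $\mathfrak l(a)\in\{a,\,a+|s_t|\}$ and $\mathfrak f(b)\in\{b,\,b+|s_t|\}$ according to the ranges of $a$ and $b$. The task is to verify $\mathfrak f(T_{\comp{r}}(a))=T_\mu(\mathfrak l(a))$, which I would carry out by splitting on $i<t$, $i=t$, $i>t$ and substituting the block formulas above. This boundary verification is the heart of the argument and the step I expect to be the main obstacle, because one must see that the shift introduced by $\psi_t$ cancels correctly against the change $s_i\mapsto s_i\pm|s_t|$ in the translation parameter.

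What makes the cancellation go through is a \emph{range argument}. For $a\in B_i$ one has $T_{\comp{r}}(a)\in\llbracket 1+\sum_{j>i}\lambda_j,\ \sum_{j\ge i}\lambda_j\rrbracket$, and comparing the endpoints of this interval with the threshold $\sum_{j\ge t}\lambda_j=k+|s_t|$ pins down, using only $i<t$ versus $i>t$, which branch of $\psi_t$ applies to $T_{\comp{r}}(a)$. For $i<t$ one finds $T_{\comp{r}}(a)>k+|s_t|$, so $\mathfrak f$ adds $|s_t|$ while $\mathfrak l(a)=a$, and the two shifts combine to $s_i+|s_t|=s''_i$; for $i>t$ one finds $T_{\comp{r}}(a)\le k+|s_t|$, so $\mathfrak f$ adds nothing while $\mathfrak l(a)=a+|s_t|$ already carries the shift, giving the net $s_i-|s_t|=s''_i$; the case $i=t$ is immediate since then $a\le k$ and $T_{\comp{r}}(a)=a+s_t\le k+|s_t|$. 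Once these checks are done for $s_t>0$ — the case $s_t<0$ being the mirror image, obtainable either directly or from the identity $T_{\comp{r}}^{-1}=T_{(\lambda_r,\dots,\lambda_1)}$ — the internal and boundary successors of $\psi_t(T_{\comp{r}})$ coincide with $T_\mu$ everywhere, and the identity $\psi_t(T_{\comp{r}})=T_{\compplus}$ follows.
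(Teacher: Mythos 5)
Your proof is correct and follows essentially the same route as the paper's: both compute the blocks and translation parameters of $\compplus$ in terms of those of $\lambda$ and then match the circular factors of length $2$ (successor pairs) of $\psi_t(T_{\comp{r}})$ against those of $T_{\compplus}$ by a case analysis on $i<t$, $i=t$, $i>t$ combined with the same range argument locating $T_{\comp{r}}(a)$ relative to $k$ and $k+|s_t|$, deferring $s_t<0$ to symmetry. Your internal/boundary-edge bookkeeping and the explicit check that $\psi_t(T_{\comp{r}})$ is a permutation of $\llbracket 1, n+|s_t|\rrbracket$ merely make explicit what the paper leaves implicit.
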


Here $\psi_t$ acts as a substitution on the orbits of $T_{\comp{r}}$.
Recall that the circular factors of length 2 of a circular word $(a_1,\dots, a_n)$ are the words $a_ia_{(i+1)\bmod n}$, with $i \in \set[n]$.
We use the following obvious but useful result in the next proof: 
the orbits of $\sigma$ and $\alpha$ have the same circular factors of length 2.

\begin{proof}
    First, suppose that $s_t = 0$. Thus, $\psi_t(T_{\lambda}) = T_{\compplus}$, since $\psi_t$ is the identity morphism when $s_t = 0$.

    Let $t$ be an integer in the set $\set[r]$
    and $T_{\lambda}$ a \DIET \ as in Definition~\ref{D:DIET}.
    Let $\lambda' = (\lambda'_1,\dots,\lambda'_r)$ be a composition where $\lambda'_t = \lambda_t + |s_t|$ and $\lambda'_i = \lambda_i$ if $t \neq i$.
    Recall that $T_{\lambda'}$ is the \DIET \ where $B'_1,\dots, B'_r$ are subintervals of $\set[n + |s_t|]$ defined by $B'_i = \llbracket 1 + \sum_{j < i} \lambda'_i, \sum_{j \leq i} \lambda'_i \rrbracket$ and its translation parameters are $s'_i = \sum_{j > i} \lambda'_j - \sum_{j < i} \lambda'_j$.
    Then for all $x \in B'_i$, $T_{\lambda'}(x) = x + s'_i$.
    Note that the previous definition implies $T_{\lambda'_t} = T_{\compplus}$.

    We begin by describing the circular factors of length $2$ of $T_{\lambda'}$ using $\lambda$ and $B_i$ instead of $\lambda'$ and $B'_i$.

    To describe $B'_i$ using the sets $B_i$, we have three cases to consider. 
    First, if $i < t$, then 
    $$B'_i = \left\llbracket 1 + \sum_{j < i} \lambda'_i, \sum_{j \leq i} \lambda'_i \right\rrbracket = \left\llbracket 1 + \sum_{j < i} \lambda_i, \sum_{j \leq i} \lambda_i \right\rrbracket = B_i$$
    since $j \leq i < t$, we know that $\lambda'_j = \lambda_i$.
    Secondly, if $i = t$, then 
    $$B'_t = \left\llbracket 1 + \sum_{j < i} \lambda'_i, \sum_{j \leq i} \lambda'_i \right\rrbracket = \left\llbracket 1 + \sum_{j < i} \lambda_i, |s_t| + \sum_{j \leq i} \lambda_i \right\rrbracket = B_t \cup \left\llbracket 1 + \sum_{j \leq t} \lambda_j,  |s_t| + \sum_{j\leq t} \lambda_j \right\rrbracket$$
    as $\lambda'_t = \lambda_t + |s_t|$ and $\lambda'_i = \lambda_i$.
    Finally, if $i > t$, then 
    $$B'_t = \left\llbracket 1 + \sum_{j < i} \lambda'_i, \sum_{j \leq i} \lambda'_i \right\rrbracket = \left\llbracket 1 + |s_t| + \sum_{j < i} \lambda_i, |s_t| + \sum_{j \leq i} \lambda_i \right\rrbracket = \{x + |s_t| \mid x \in B_i\}.$$

    We have that $T_{\lambda'}(x) = x + \sum_{j > i } \lambda'_j - \sum_{j < i } \lambda'_j$. We factorize $\lambda'_t$ to obtain
    $$T_{\lambda'}(x) = \begin{cases}
        x +  \sum_{t > j > i }  \lambda'_j + \sum_{j > t } \lambda'_j - \sum_{j < i }  \lambda'_j + \lambda'_t & \text{ if } i < t \\
        x + \sum_{j > i } \lambda'_j- \sum_{j < i }  \lambda'_j & \text{ if } i = t \\
        x + \sum_{j > i } \lambda'_j - \sum_{j < t }  \lambda'_j- \sum_{t < j < i }  \lambda'_j - \lambda'_t & \text{ if } i > t.
        \end{cases}$$
    Replacing $\lambda'_i$ by $\lambda_i$ if $i \neq t$ and $\lambda'_t$ by $\lambda_t + |s_t|$ give that
    \begin{align*}
        T_{\lambda'}(x) & = \begin{cases}
        x + \sum_{t > j > i }  \lambda_j + \sum_{j > t } \lambda_j - \sum_{j < i }  \lambda_j + \lambda_t + |s_t| & \text{ if } i < t \\
        x + \sum_{j > i } \lambda_j - \sum_{j < i }  \lambda_j & \text{ if } i = t \\
        x + \sum_{j > i } \lambda_j - \sum_{j < t }  \lambda_j- \sum_{t < j < i }  \lambda_j + \lambda_t -|s_t| & \text{ if } i > t,
        \end{cases} \\
        & = \begin{cases}
        x + s_i + |s_t| & \text{ if } i < t \\
        x + s_i         & \text{ if } i = t\\
        x + s_i  -|s_t| & \text{ if } i > t.
        \end{cases}
    \end{align*}
    Therefore, the circular factors of length 2 of $T_{\lambda'}$ are
    $x \cdot (x + s_i + |s_t|)$ if $x \in B_i$ and $i < t$, $x \cdot (x + s_i)$ if $ x \in B_t$ and $x \cdot (x + s_i - |s_t|)$, otherwise. 

    Now, it remains to find the circular factors of length 2 of $\psi(T_{\lambda})$.
    There are two cases to analyze depending on the translation parameter $s_t$.

    Let $x$ be an integer in $\set[n]$. There exist an integer $i$ such that $x \in B_i$. 

    Suppose that  $s_t$ is positive. Thus, we have that $k = \sum_{j \leq t} \lambda_j$.

    If $i < t$, we have that $x \in \set[\sum_{j < t} \lambda_j]$, then $x < k$. Also, we have that $T_{\comp{r}}(x) > k+|s_t|$, since $\sum_{j > t} \lambda_j = s_t + \sum_{j < t} \lambda_j$.
    Therefore, we obtain that 
    $\psi_t(x \cdot T_{\lambda}(x)) = x \cdot (T_{\lambda}(x) + |s_t|) = x \cdot (x + s_i + |s_t|)$.
    Since $i < t$, we know that $x \in B_i$  implies that $x \in B'_i$ and $T_{\lambda'}(x) = x +s_i + |s_t|$ , the element $x$ is sent to the same element in both \DIET s , as desired.
    
    If $i = t$, we have that $x \in \llbracket 1+\sum_{j < t} \lambda_j ,\sum_{j \leq t} \lambda_j\rrbracket$, then $x < k$. Moreover, we have $T_{\comp{r}}(x) \in \llbracket \sum_{j< t} \lambda_j +1, k+|s_t|\rrbracket$, since $\sum_{j > t} \lambda_j = s_t + \sum_{j < t} \lambda_j$.
    Therefore, we obtain that 
    $$\psi_t(x \cdot T_{\lambda}(x)) = \begin{cases}
        x \cdot (x + s_i) & \text{ if } x < k \\
        x \cdot (x + s_i ) \cdot (x + s_i + |s_t|) & \text{otherwise.} 
        \end{cases}$$
        Which means that all elements of $B_t$ are sent to $(x + s_i)$, as desired. 

        If $i > t$, we have that $x \in \llbracket 1+\sum_{j \leq t} \lambda_j , n\rrbracket$, then $x > k$. Moreover, we have that $T_{\lambda}(x) < k + |s_t|$, since $\sum_{j \leq t} \lambda_j > |s_t| + \sum_{j > t} \lambda_j$.
        Therefore, we obtain that
        \begin{align}~\label{eq:cases}
        \psi_t(x\cdot T_{\lambda}(x)) = \begin{cases}
        x \cdot (x + |s_t|) & \text{ if } x \leq k+|s_t| \text{ and }  \sigma(x) \leq k,\\
        x \cdot (x + |s_t|) \cdot T_{\lambda}(x) \cdot (T_{\lambda}(x) + |s_t|)& \text{ if } x \leq k+|s_t| \text{ and }  \sigma(x) > k,\\
        (x + |s_t|) \cdot T_{\lambda}(x)& \text{ if } x > k+|s_t| \text{ and }  T_{\lambda}(x) \leq k,\\
        (x + |s_t|) \cdot T_{\lambda}(x) \cdot (T_{\lambda}(x) + |s_t|) & \text{ if } x > k+|s_t| \text{ and }  T_{\lambda}(x) > k.\\
        \end{cases}
    \end{align}
    Hence, we have that every element between $k+1$ and $k+|s_t|$ is sent to $x + s_i$. Thus, all elements of $B'_t$ are sent to the correct value.
    Moreover, every element of $B'_i$ with $i > t$ are of the form $x + |s_t|$ corresponding to the last two cases in Equation~\eqref{eq:cases}. The circular factors of length 2 are $(x + |s_t|) \cdot T_{\lambda}(x) = (x + |s_t|)\cdot (x + s_i)$, which can be rewritten to $y \cdot (y + s_i - |s_t|)$ if we substitute $y$ to $x + |s_t|$, as desired.

    If $s_t$ is negative, a similar case-by-case analysis shows the equality.
\end{proof}

Now, we are ready to  prove Lemma~\ref{l:diffcycles}.

\begin{proof}[of Lemma~\ref{l:diffcycles}]
    By Lemma~\ref{l:morphism}, we have that
    $$\psi_t(T_{\comp{r}}) = T_{\compplus},$$
    for all $t \in \set$. As a substitution on a set of circular words preserves the number of circular words, we have that
    $$\gamma(T_{\comp{r}}) = \gamma(\psi_t\comp{r}) = \gamma(T_{\compplus})$$ as claimed.
\end{proof}

\begin{theorem}~\label{thm:algo}
    The function $\cycle$ counts the orbits of the \DIET \ $T_{\lambda}$.
\end{theorem}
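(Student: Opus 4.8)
The plan is to prove $f(\lambda) = \gamma(T_\lambda)$ by strong induction on $\lambda$ with respect to the order $\leq$ introduced above. This order is well-founded, since it first compares the number of parts (a nonincreasing quantity that can strictly decrease only finitely often) and then breaks ties by the lexicographic order on tuples of positive integers; moreover the remark following \eqref{eqn:formule} already records that every composition on the right-hand side of \eqref{eqn:formule} is strictly $\leq$-smaller than $\lambda$, so the recursion terminates and the induction is legitimate. For the base case $r = 1$ I would note that $s_1 = 0$, so $T_{(\lambda_1)}$ is the identity on $\set[\lambda_1]$ and has exactly $\lambda_1$ fixed points, which matches $f(\lambda_1) = \lambda_1$.

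For the inductive step, let $t \in \set$ be the smallest index with $\lambda_t \ge |s_t|$, which exists by Lemma~\ref{l:existence}, and split according to the branches of \eqref{eqn:formule}. If $|s_t| = 0$, Lemma~\ref{l:addcycles} gives $\gamma(T_\lambda) = \lambda_t + \gamma(T_{\compenlever})$; since $\compenlever$ has fewer parts, the induction hypothesis yields $\gamma(T_{\compenlever}) = f(\compenlever)$, and the second line of \eqref{eqn:formule} closes this case. If $\lambda_t > |s_t| > 0$, set $\mu = \compmoins$; because $s_t$ depends only on the parts other than $\lambda_t$, the $t$-th translation parameter of $\mu$ is again $s_t$, and raising its $t$-th part by $|s_t|$ recovers $\lambda$. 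Hence Lemma~\ref{l:diffcycles} applied to $\mu$ gives $\gamma(T_\lambda) = \gamma(T_\mu)$. As $\mu$ has the same number of parts as $\lambda$ but is lexicographically smaller (its $t$-th part dropped by $|s_t| \ge 1$ while earlier parts are unchanged), the induction hypothesis and the fourth line of \eqref{eqn:formule} give $\gamma(T_\lambda) = f(\mu) = f(\lambda)$.

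The remaining, and most delicate, case is $\lambda_t = |s_t| > 0$, where $f(\lambda) = f(\compenlever)$. Here a naive appeal to Lemma~\ref{l:diffcycles} would reduce the $t$-th part to $\lambda_t - |s_t| = 0$, leaving $\compmoins$ with a zero part, which lies outside the literal hypotheses of the earlier lemmas. The key observation I would make is that a zero-width block is invisible to the interval exchange: when $\lambda_t = 0$ the block $B_t$ is empty while every other block $B_i$ and every other translation parameter $s_i$ is unchanged, so $T_{\compmoins}$ coincides with $T_{\compenlever}$ as a permutation of the same ground set. The main work is then to check that the substitution argument of Lemma~\ref{l:morphism}, and hence the orbit-count equality of Lemma~\ref{l:diffcycles}, survives when the source block $B_t$ is empty; in that degenerate situation $\psi_t$ merely inserts $|s_t|$ new letters and performs no splitting inside the (empty) block, so the proof goes through unchanged and yields $\gamma(T_\lambda) = \gamma(T_{\compmoins}) = \gamma(T_{\compenlever})$. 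Combining this with the induction hypothesis $\gamma(T_{\compenlever}) = f(\compenlever)$ gives $\gamma(T_\lambda) = f(\compenlever) = f(\lambda)$, completing the induction. I expect this boundary case to be the only genuine obstacle, the other two cases being direct transcriptions of Lemmas~\ref{l:addcycles} and~\ref{l:diffcycles} together with the induction hypothesis.
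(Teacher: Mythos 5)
Your proof is correct and follows essentially the same route as the paper's: induction over the well-founded order on compositions, with the three recursive branches discharged by Lemmas~\ref{l:addcycles} and~\ref{l:diffcycles}. You are in fact slightly more careful than the paper on the two points it glosses over, namely applying Lemma~\ref{l:diffcycles} to $\compmoins$ rather than to $\lambda$ itself (using that $s_t$ does not involve $\lambda_t$), and justifying the boundary case $\lambda_t = |s_t|$, where the paper simply identifies the reduced tuple with a zero part with $\compenlever$.
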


\begin{proof}
    By Lemma~\ref{l:existence}, we know there exists an integer $t \in \set[r]$ such that $\lambda_t \geq |s_t|$ for any composition $\lambda$. Also, $\compmoins$ and $\compenlever$ are both compositions, since $\lambda_t - |s_t| \geq 0$. Hence, the function $f$ is well-defined.
   
    We show case by case that the recursive step of the function $f$ preserves the number of orbits. 
    First, $\gamma(\lambda_1) = \lambda_1$, since $T_{\lambda_1}(x) = x$ for all $x \in \set[\lambda_1]$.
    Secondly, if $|s_t| = 0$, we know that $\gamma\comp{r} = \lambda_t + \gamma\compenlever$ by Lemma~\ref{l:addcycles}. 
    Thirdly, if $|s_t| = \lambda_t$, we know that 
    $\gamma\comp{r} = \gamma\compmoins$ by Lemma~\ref{l:diffcycles}. However, $\lambda_t = |s_t|$ implies $\compmoins = (\lambda_1,\dots,\lambda_{t-1},0,\lambda_{t+1},\dots,\lambda_r) = \gamma\compenlever$ 
     as desired.
     Finally, if $\lambda_t > |s_t|$, the equality
     $\gamma\comp{r} = \gamma\compmoins$ follows directly from Lemma~\ref{l:diffcycles}. 
     Thus, the function $\cycle$ counts the number of orbits of $T_{\lambda}$.
\end{proof}

\begin{corollary}
    A symmetric discrete interval exchange is minimal if and only if $\cycle\comp{r} = 1$.
\end{corollary}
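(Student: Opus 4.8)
The plan is to obtain the statement as an immediate consequence of Theorem~\ref{thm:algo}, once the word ``minimal'' is spelled out numerically. First I would recall the definition given in Section~\ref{sec:def}: a \DIET{} $T_\lambda$ is \emph{minimal} precisely when it has only one orbit, which in the notation used throughout means $\gamma(T_\lambda) = 1$. The entire content of the corollary is then to transport this condition from the permutation $T_\lambda$ to the recursively computable quantity $\cycle\comp{r}$.

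The key step is a single appeal to Theorem~\ref{thm:algo}, which asserts the identity $\cycle\comp{r} = \gamma(T_\lambda)$ for every composition $\lambda$. Substituting this identity into the definition of minimality yields the chain of equivalences
$$ T_\lambda \text{ is minimal} \iff \gamma(T_\lambda) = 1 \iff \cycle\comp{r} = 1, $$
which is exactly the asserted biconditional. No case analysis or auxiliary lemma is required at this stage, since all of the combinatorial work—Lemmas~\ref{l:existence}, \ref{l:addcycles}, \ref{l:diffcycles}, and \ref{l:morphism}, assembled in Theorem~\ref{thm:algo}—has already reduced the orbit count $\gamma(T_\lambda)$ to the value $\cycle\comp{r}$.

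Consequently I do not expect any genuine obstacle: the corollary is a direct reformulation of the theorem rather than a new result. The only point demanding a little care is definitional, namely ensuring that ``having a single orbit'' is read precisely as the equality $\gamma(T_\lambda) = 1$, so that Theorem~\ref{thm:algo} applies verbatim. Once this identification is noted, the proof is complete in one line.
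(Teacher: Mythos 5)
Your proposal is correct and matches the paper's (implicit) reasoning exactly: the paper states this corollary without proof precisely because, as you observe, it is the definition of minimality ($\gamma(T_\lambda)=1$) combined verbatim with the identity $\cycle\comp{r}=\gamma(T_\lambda)$ from Theorem~\ref{thm:algo}. Nothing further is needed.
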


\section{Tree of circular compositions}~\label{sec:tree}

How can we use the function $f$ to enumerate the compositions defining minimal \DIET? 
For minimal symmetric discrete $2$-interval exchanges, 
solutions are given by the Raney tree~\cite{BdL1997,R1973} (also called Calkin-Wilf tree~\cite{CW2000}) or the Stern-Brocot tree~\cite{B1861,S1858}.
A composition $\lambda$ is called \emph{circular} if the \DIET \ $T_{\lambda}$ is minimal. 
We propose a generalization of the Raney tree to enumerate circular compositions of any lengths.   

The \emph{Raney tree} is a complete infinite binary tree  whose nodes are reduced fractions, described recursively by the following rules :
\begin{enumerate}[$\bullet$]
    \item the root of the tree is the fraction $\frac{1}{1}$;
    \item each vertex $\frac{i}{j}$ has two children: the left child is the fraction $\frac{i}{i+j}$ and the right child is the $\frac{i+j}{j}$.
\end{enumerate}
\begin{figure}
    \begin{center}
    \includegraphics[scale=1]{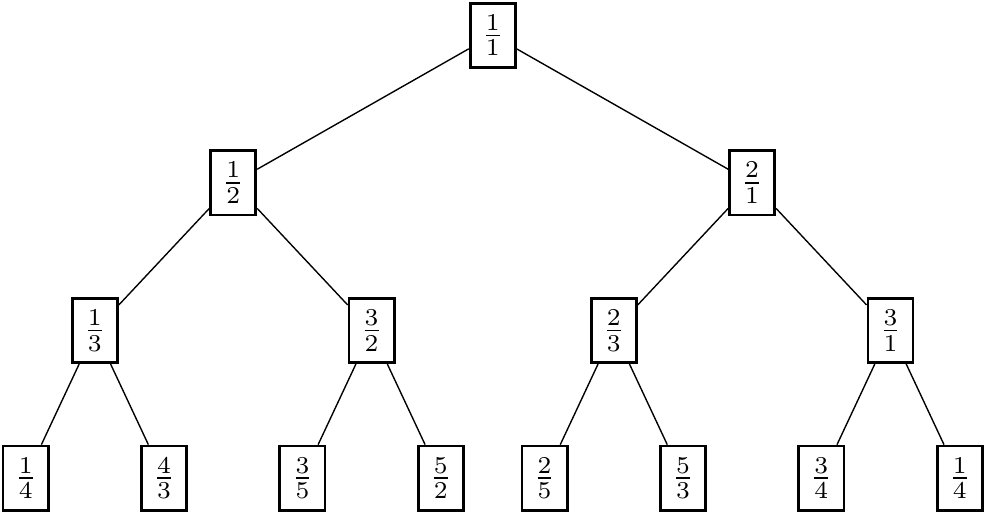}
    \caption{Raney tree}
    \label{fig:arbre_fractions}
    \end{center}
\end{figure}
Figure~\ref{fig:arbre_fractions} shows the first few levels of the Raney tree.
This tree contains each reduced positive rational number exactly once as shown by Berstel and de Luca~\cite{BdL1997}, and by Calkin and Wilf~\cite{CW2000}. Thus it contains all pairs of coprime natural numbers.
Therefore, the Raney tree contains every circular composition of length 2 exactly once.

Observe that applying the function $f$ to any vertex in the Raney tree gives the path from that vertex to the root $1/1$ in the Raney tree.
Therefore, we can use the function $f$ to generalize the Raney tree. The root of this generalized tree is still $(1,1)$, since it is an extension of the Raney tree. A circular composition $\comp{r}$ has children of the form
$\compplus$ for all $t \in \set[r]$ and $(\lambda_1,\dots, \lambda_t, |\delta_t|, \lambda_{t+1}, \dots, \lambda_r )$ for all $t \in \llbracket 0, r \rrbracket$,
where $\delta_t = n - 2\sum_{j \leq t} \lambda_j$.
Figure~\ref{fig:exempletype} shows all the possible children of the composition $(1,1,2,4)$.
Note that $|s_t|$ cannot be zero, otherwise the composition is not circular by Lemma~\ref{l:addcycles}.
Moreover, the children of $\comp{r}$ of the form $\compplus$ are circular by Lemma~\ref{l:diffcycles}. 
There are two problems with the children of $\comp{r}$ of the form $(\lambda_1,\dots,\lambda_{t},|\delta_t|,\lambda_{t+1},\dots,\lambda_r)$.
First, if $\delta_t = 0$ (e.g. $\delta_3$ in Figure~\ref{fig:exempletype}), the result is the same \DIET. We want to avoid this case as it does not  add any new information.
Secondly, the resulting composition has two parents.  
For example, the parents of the circular composition $(1,1,2,4)$ are the compositions $(1,1,4)$ ($\delta = (6,4,\underline{2},-6)$) and $(1,1,2)$ ($\delta = (4,2,0,\underline{-4})$) (as shown in Figure~\ref{fig:coincide}).

\begin{figure}
    \begin{center}
    \begin{minipage}{0.49\textwidth}
    \centering
    \includegraphics[scale=0.75]{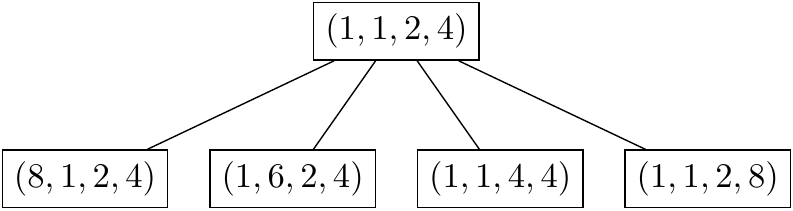}
    \subcaption{Children of type 1}
    \end{minipage}
    \begin{minipage}{0.49\textwidth}
    \centering
    \includegraphics[scale=0.75]{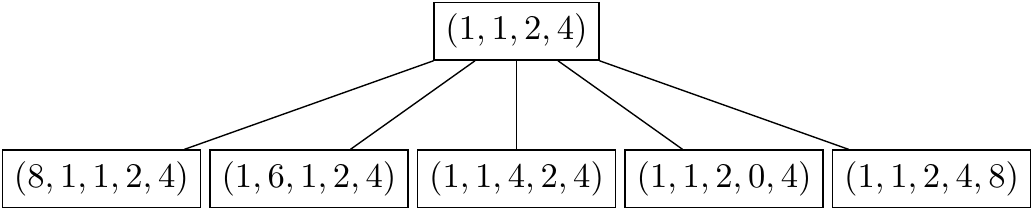}
    \subcaption{Children of Type 2 (Note that $(1,1,2,0,4)$ is not a composition)}
    \end{minipage}
    \caption{Children of Type 1 and 2 of the composition $(1,1,2,4)$ whose translation vector is $(7,5,2,-4)$ and $\delta = (8,6,4,0,-8)$}
    \label{fig:exempletype}
    \end{center}
\end{figure}

\begin{figure}
    \begin{center}
    \includegraphics[scale=1]{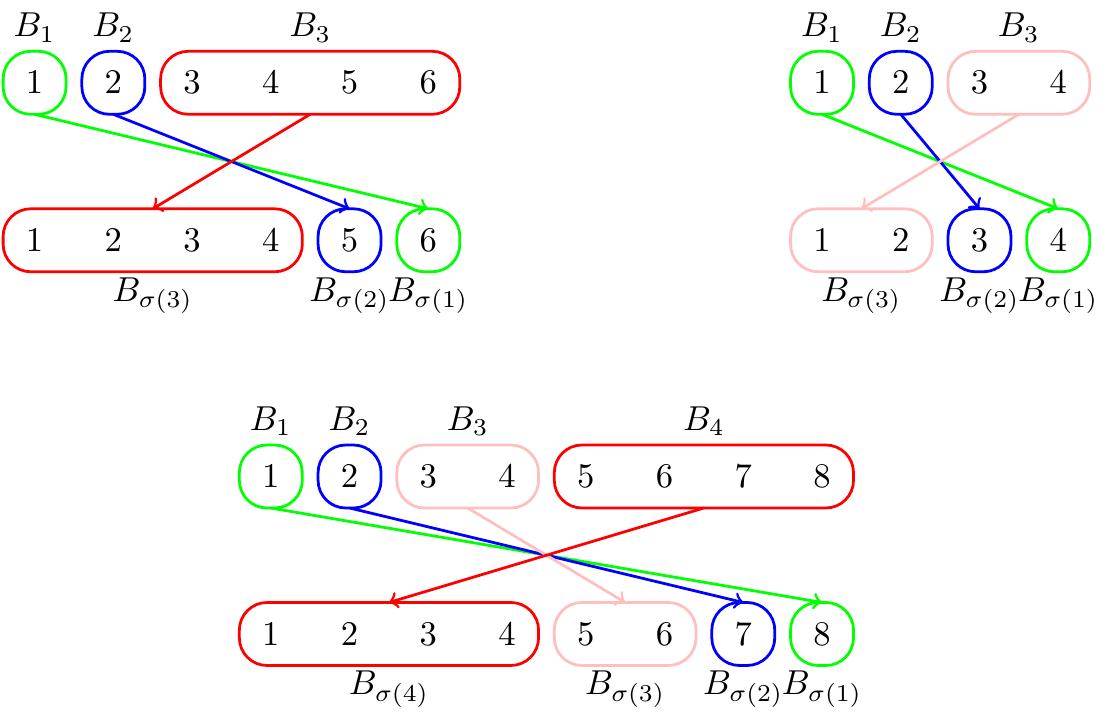}
    \caption{The top right discrete interval is $(1,1,4)$ and the top left one is $(1,1,2)$ both can be the parent of $(1,1,2,4)$. The $\delta$ of $(1,1,4)$ is $(6,4,2,-6)$ and the $\delta$ of $(1,1,2)$ is $(4,2,0,-4)$. Hence, $(1,1,|\delta_2|,2) = (1,1,2,4) = (1,1,2,|\delta_4|)$}
    \label{fig:coincide}
    \end{center}
\end{figure}

Adding $\delta_t$ preserves the number of orbits of the symmetric discrete interval exchange, since the $(t+1)$-th translation parameter of the composition $(\lambda_1,\dots, \lambda_t, |\delta_t|, \lambda_{t+1}, \dots, \lambda_r )$ is by definition $ s_{t+1} = \sum_{j > t} \lambda_t - \sum_{j \leq t} \lambda_t = n - 2 \sum_{j \leq t} = \delta_t$. 
By Lemma~\ref{l:diffcycles}, these children of $\comp{r}$ are also circular. 
Compositions constructed by the two rules have two parents, since adding $\delta_t$ or $\lambda_t$ if $\delta_t > 0$ (resp. $\lambda_{t+1}$ if $\delta_t < 0$) produces the same composition (see Figure~\ref{fig:coincide}). 
To ensure that every child is a composition and that each composition has a single parent, we add a condition on $\delta$.

The \emph{tree of circular compositions} is an infinite tree described by the following recursive rules:
\begin{enumerate}[$\bullet$]
    \item the root of the tree is the composition $(1,1)$;
    \item each vertex $\comp{r}$ has children of the form:
        \begin{enumerate}[{Type }1:]
            \item $\compplus$ for all $t \in \set$;
            \item if $\delta_t > \lambda_{t+1}$ or $-\delta_t > \lambda_t$, $(\lambda_1,\dots, \lambda_t, |\delta_t|, \lambda_{t+1}, \dots, \lambda_r )$ for all $t \in \llbracket 0, r \rrbracket$;
        \end{enumerate}
\end{enumerate}
where $\delta_t = n - \sum_{j \leq t} \lambda_j$.

\begin{figure}
    \begin{center}
    \includegraphics[scale=1]{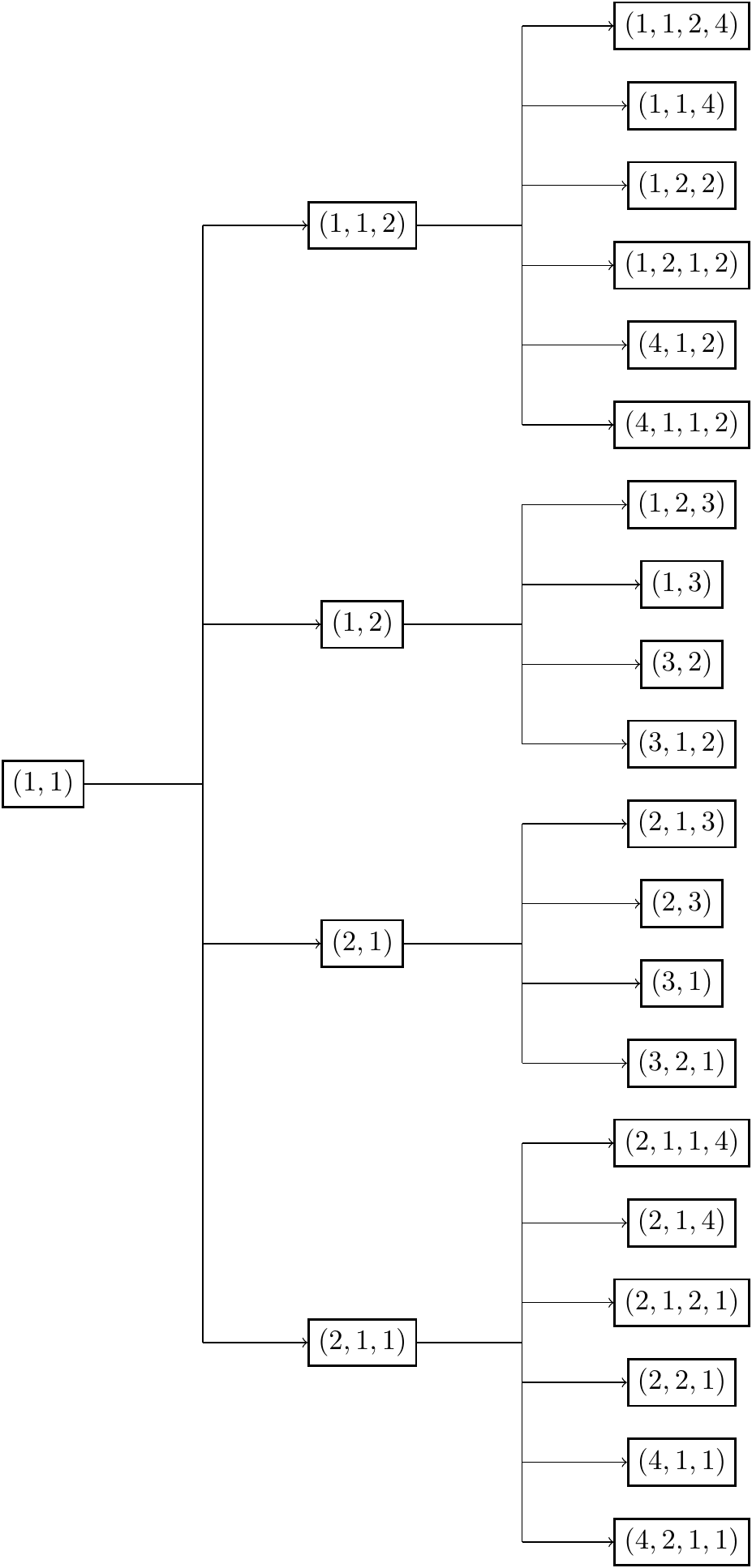}
    \caption{The tree of circular compositions}
    \label{fig:gen_tree}
    \end{center}
\end{figure}

Figure~\ref{fig:gen_tree} shows the first few levels of the tree of circular compositions.
Note that $\delta_0 > \lambda_1$ and $-\delta_r > \lambda_r$, hence the condition is always satisfied in those cases.
Thus, the parent of $(1,1,2,4)$ is $(1,1,2)$, since $-\delta_3 = 4 > \lambda_3$ and $(1,1,4)$ is not its parent since $\delta_3 = 2 < \lambda_4 = 4$ and $-\delta_3 = -2 < \lambda_3 = 2$.
Now we prove that each circular composition is a child of type $1$ or $2$ but not both and has a unique parent.

\begin{lemma}~\label{l:unicite1}
    Let $\comp{r}$ be a circular composition and $t$ an integer in $\set[r]$. 
    If $\lambda_t > |s_t|$, then $\lambda_i < |s_i|$ for all $i \in \set$, $i \neq t$.
\end{lemma}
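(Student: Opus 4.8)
The plan is to collapse the whole statement into a single clean reformulation of the inequality $\lambda_i \ge |s_i|$ in terms of the partial sums of $\lambda$, after which the conclusion falls out immediately from the strict monotonicity of those sums. First I would set $n = \sum_{j=1}^r \lambda_j$ and write $p_i = \sum_{j \le i}\lambda_j$ for $0 \le i \le r$, so that $p_0 = 0$, $p_r = n$, and $p_0 < p_1 < \cdots < p_r$ because every part is a positive integer. Using $\sum_{j > i}\lambda_j = n - p_i$ and $\sum_{j<i}\lambda_j = p_{i-1}$, the translation parameter becomes $s_i = n - p_i - p_{i-1}$, and of course $\lambda_i = p_i - p_{i-1}$.

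The key step is to prove the equivalence
\[
\lambda_i \ge |s_i| \iff 2p_{i-1} \le n \le 2p_i ,
\]
together with its strict analogue $\lambda_i > |s_i| \iff 2p_{i-1} < n < 2p_i$. I would get this by expanding $\lambda_i \ge |s_i|$ into the pair $-\lambda_i \le s_i$ and $s_i \le \lambda_i$; substituting $s_i = n - p_i - p_{i-1}$ and $\lambda_i = p_i - p_{i-1}$, the first inequality collapses to $2p_{i-1} \le n$ and the second to $n \le 2p_i$, with strictness preserved throughout. Geometrically this says that $\lambda_i \ge |s_i|$ holds exactly when the midpoint $n/2$ lies in the block $B_i = \llbracket p_{i-1}+1, p_i\rrbracket$, which is the conceptual heart of the lemma; it also accounts for the Remark following Lemma~\ref{l:existence}, since $n/2$ can meet more than one block only by landing (as an integer) on a boundary $p_i$, forcing equality and giving at most two admissible indices.

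With the reformulation in hand the conclusion is one line. The hypothesis $\lambda_t > |s_t|$ gives $2p_{t-1} < n < 2p_t$. For $i < t$, monotonicity yields $2p_i \le 2p_{t-1} < n$, so the condition $n \le 2p_i$ required for $\lambda_i \ge |s_i|$ fails and hence $\lambda_i < |s_i|$; for $i > t$ we have $2p_{i-1} \ge 2p_t > n$, so the condition $2p_{i-1} \le n$ fails and again $\lambda_i < |s_i|$. This covers every $i \ne t$.

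I do not anticipate a genuine obstacle: the only real work is spotting the midpoint reformulation, and after that the argument is pure monotonicity of the strictly increasing sequence $p_0 < \cdots < p_r$. One point I would flag is that circularity of $\lambda$ is never used — the statement holds for an arbitrary composition — so I would verify whether the hypothesis is needed for the way the lemma is invoked later, or whether it appears only to match the surrounding discussion of the tree of circular compositions.
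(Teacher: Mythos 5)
Your proof is correct, and it takes a genuinely different route from the paper's. The paper first shows (by a sign analysis split on whether $s_t>0$ or $s_t<0$) that $s_i>0$ for $i<t$ and $s_i<0$ for $i>t$, and then writes $s_i$ explicitly in terms of $s_t$, e.g.\ $s_i = s_t + \lambda_t + 2\sum_{i<j<t}\lambda_j + \lambda_i > \lambda_i$ for $i<t$, with a symmetric computation for $i>t$; this requires two nested case distinctions and a fair amount of bookkeeping. Your reformulation $\lambda_i \ge |s_i| \iff 2p_{i-1} \le n \le 2p_i$ (with the strict analogue) collapses all of that into monotonicity of the partial sums $p_0 < p_1 < \cdots < p_r$, and it has the added benefit of unifying this lemma with Lemma~\ref{l:existence} — the two displayed inequalities in that proof are exactly $2p_{t-1}\le n$ and $2p_t \ge n$, i.e.\ the existence half of your midpoint criterion — and of explaining the Remark about at most two admissible indices. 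Your observation that circularity of $\lambda$ is never used is also accurate: the paper's own proof of this lemma does not invoke it either, so the hypothesis is cosmetic here (it matters only for how the lemma is deployed in Corollary~\ref{cor:uni}). The one thing your write-up buys at a small cost is that the paper's version additionally records the sign information $s_i>0$ for $i<t$ and $s_i<0$ for $i>t$, which is not needed for the statement but is the kind of fact the surrounding section occasionally leans on; your criterion recovers it instantly ($s_i = n - p_i - p_{i-1} > 0$ iff $n > p_i + p_{i-1}$, which holds for $i<t$ since $p_i+p_{i-1} \le 2p_{t-1} < n$), so nothing is lost.
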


\begin{proof}
    Let $t$ be an integer in $\set$ such that $\lambda_t > |s_t|$.
    First, we show that $s_i > 0$ if $i < t$ (resp. $s_i < 0$ if $i > t$). 
    Let $i$ be an integer in the set $\set[t-1]$.

    If $s_t > 0$, by the definition of $s_t$ the following inequality is satisfied: 
    $$\sum_{k > t} \lambda_k > \sum_{k<t} \lambda_k.$$
    Since $i < t$ and all $\lambda_k$'s are positive, we have that 
    $$\sum_{k > i} \lambda_k > \sum_{k > t} \lambda_k > \sum_{k<t} \lambda_k > \sum_{k<i} \lambda_k.$$
    Hence $s_i  = \sum_{k > i} \lambda_k -\sum_{k<i} \lambda_k > 0$.

    If $s_t < 0$, by the definition of $s_t$, we know that 
        $$\lambda_t > |s_t| = -s_t = \sum_{k < t} \lambda_k - \sum_{k>t} \lambda_k.$$
        Moreover, adding $\sum_{k > t} \lambda_k$ on both sides, we have that 
        $$ \lambda_t + \sum_{k > t} \lambda_k > \sum_{k<t} \lambda_k.$$
    Since $i < t$ and all $\lambda_k$'s are positive, the previous inequality implies that 
    $$\sum_{k > i} \lambda_k \geq \lambda_t + \sum_{k > t} \lambda_k > \sum_{k<t} \lambda_k > \sum_{k<i} \lambda_k.$$
    Hence, $s_i = \sum_{k > i} \lambda_k -\sum_{k<i} \lambda_k > 0$ as claimed.

    Similar arguments show that, if  $i > t$, then $s_i < 0$.

    Now we can show that $\lambda_i < |s_i|$ if $i \neq t$. There are two cases to consider.
    
    Suppose that $s_t > 0$. 
    If $i < t$, the translation parameter $s_i$ can be expressed as a sum of $s_t$ and some $\lambda_k's$ given below:
    \begin{align*}
        s_t + \lambda_t + 2\sum_{i < j < t} \lambda_j + \lambda_i =&  \sum_{j > t} \lambda_j - \sum_{j<t} \lambda_j + \lambda_t + 2\sum_{i < j < t} \lambda_j + \lambda_i   \\
        =& \sum_{j > i} \lambda_j - \sum_{j < i } \lambda_j \\
        =& s_i.
    \end{align*}
    Moreover, we have that $ s_i = s_t + \lambda_t + 2\sum_{i < j < t} \lambda_j + \lambda_i > \lambda_i$ 
    since $s_t$ is positive. Thus, $|s_i| = s_i > \lambda_i$ since $i < t$.

    If $i > t$, the translation parameter $s_i$ can be expressed by $s_t$ and some $\lambda_k$ as below:
    \begin{align*}
        s_t - \lambda_t - 2\sum_{t < j < i} \lambda_j - \lambda_i =& \sum_{j > t} \lambda_j - \sum_{j < t} \lambda_j - \lambda_t - 2\sum_{t < j < i} \lambda_j - \lambda_i \\ 
        =& \sum_{j > i} \lambda_j - \sum_{j < i } \lambda_j \\
        =& s_i.
    \end{align*}
    Since $i > t$, we have that $s_i <0$, hence $|s_i| = -s_t + \lambda_t + 2\sum_{t < j < i} \lambda_j + \lambda_i$ which is a sum of positive integers since $\lambda_t - s_t > 0$. Hence, $|s_i| > \lambda_i$ as claimed.
    
    Similar arguments show that the lemma also holds in the case where $s_t$ is negative.
\end{proof}

\begin{lemma}~\label{l:unicite2}
    Let $\comp{r}$ be a circular composition and $t$ be an integer in $\set[r]$.
    If $\lambda_t = |s_t|$, then there exists an integer $d \in \{t-1,t+1\}$ such that $\lambda_d = |s_d|$ and $\lambda_i < |s_i|$ for all $i \in \set - \{t,d\}$. Moreover $\lambda_d \neq \lambda_t$.
\end{lemma}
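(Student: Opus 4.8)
The plan is to reduce to a single sign convention, locate the second equality index among the neighbours of $t$, and then treat the three assertions separately, reusing the computation of Lemma~\ref{l:unicite1} for the inequalities and a short orbit-counting argument for the final claim. First I would record the basic identities. Since every $\lambda_i$ is positive, $s_{i+1}=s_i-\lambda_i-\lambda_{i+1}$, so the translation vector is strictly decreasing (as noted after Definition~\ref{D:DIET}). Because $\lambda_t=|s_t|>0$ we have $s_t\neq 0$; passing if necessary to the reversed composition $(\lambda_r,\dots,\lambda_1)$ — whose \DIET\ is $T_\lambda^{-1}$, hence has the same orbits, is again circular, and satisfies $s^{\mathrm{rev}}_{r+1-i}=-s_i$ — I may assume $s_t>0$, i.e. $s_t=\lambda_t$. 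Then $s_{t+1}=s_t-\lambda_t-\lambda_{t+1}=-\lambda_{t+1}$, so $t+1$ is a genuine index (as $r\ge 2$, we have $s_r<0$, forcing $t\neq r$) and $|s_{t+1}|=\lambda_{t+1}$. This already exhibits the required $d=t+1$. Strict monotonicity together with $s_t>0>s_{t+1}$ pins down the signs: $s_i>0$ for $i\le t$ and $s_i<0$ for $i\ge t+1$.

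Next I would prove the inequalities for $i\neq t,d$. For $i<t$ the exact computation of Lemma~\ref{l:unicite1} gives $s_i=s_t+\lambda_t+2\sum_{i<j<t}\lambda_j+\lambda_i>\lambda_i$, hence $|s_i|=s_i>\lambda_i$. For $i>t$ the same computation gives $|s_i|=-s_i=(\lambda_t-s_t)+2\sum_{t<j<i}\lambda_j+\lambda_i$; here $\lambda_t-s_t=0$, so for $i=t+1$ the empty sum yields exactly $|s_{t+1}|=\lambda_{t+1}$ (re-deriving $d=t+1$), while for $i>t+1$ the sum contains the positive term $2\lambda_{t+1}$, giving $|s_i|>\lambda_i$. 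Thus $t$ and $t+1$ are the only two equality indices and they are adjacent, which is exactly the first part of the statement.

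Finally, the claim $\lambda_d\neq\lambda_t$ is where circularity must enter, and I expect this to be the main obstacle since nothing so far has used minimality. Suppose $\lambda_t=\lambda_{t+1}=a$. Writing $P=\sum_{j<t}\lambda_j$, the relation $s_t=\lambda_t$ forces $\sum_{j>t+1}\lambda_j=P$, so $B_t=\llbracket P+1,P+a\rrbracket$ and $B_{t+1}=\llbracket P+a+1,P+2a\rrbracket$ are adjacent and $n=2(P+a)$. On $B_t$ the map is $x\mapsto x+a$ and on $B_{t+1}$ it is $x\mapsto x-a$, so $B_t\cup B_{t+1}$ is $T_\lambda$-invariant and splits into the $a$ transpositions $(x,x+a)$, $x\in B_t$. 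Each such transposition is a complete orbit of size $2$; hence $T_\lambda$ is minimal only if $2a=n$ and $a=1$, i.e. only for the root $(1,1)$. For every circular composition other than $(1,1)$ this contradicts minimality, yielding $\lambda_t\neq\lambda_{t+1}=\lambda_d$. Transporting back through the reversal handles the case $s_t<0$ verbatim, completing the proof.
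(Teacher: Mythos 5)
Your proof is correct, and for the first two assertions it is essentially the paper's argument: you locate $d$ as the neighbour $t+1$ (after normalizing to $s_t>0$, which you justify cleanly via the reversal $\lambda\mapsto(\lambda_r,\dots,\lambda_1)$ and $s^{\mathrm{rev}}_{r+1-i}=-s_i$, where the paper just says ``similar arguments''), and you get the strict inequalities $\lambda_i<|s_i|$ for $i\notin\{t,d\}$ from the same telescoping identity $s_i=s_t\pm(\lambda_t+2\sum\lambda_j+\lambda_i)$ used in Lemma~\ref{l:unicite1}; this is in fact slightly cleaner than the paper's version, whose displayed formula $\lambda_i=s_i-2\sum_{i+1<k\leq t}\lambda_k$ contains an index typo that would make the sum empty at $i=t-1$. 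Where you genuinely diverge is the final claim $\lambda_d\neq\lambda_t$: the paper argues through the recursion, invoking Theorem~\ref{thm:algo} and Lemma~\ref{l:addcycles} to show $\gamma(\lambda)>1$, whereas you exhibit the obstruction directly on the permutation, showing that $\lambda_t=\lambda_{t+1}=a$ forces $B_t\cup B_{t+1}$ to be invariant and to decompose into $a$ transpositions. Your version is more elementary (it needs only Definition~\ref{D:DIET}, not the machinery of Section~\ref{sec:counting}) and, importantly, it surfaces the genuine exception $(1,1)$, for which $\lambda_t=\lambda_d=1$ and the lemma as literally stated fails; the paper's own contradiction argument silently breaks there too (it would need $\gamma$ of the empty composition). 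Since Corollary~\ref{cor:uni} only applies the lemma to compositions other than $(1,1)$, this does not affect the rest of the paper, but your explicit handling of that case is an improvement rather than a gap.
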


\begin{proof}
    Let $\comp{r}$ be a circular composition such that $\lambda_t = |s_t|$.
    First, we check that $\lambda_d$ exists.

    If $s_t > 0$, then 
    $$\lambda_t = \sum_{k>t} \lambda_k - \sum_{k < t} \lambda_k \quad  \Rightarrow \quad 
    \lambda_{t+1} = \sum_{k<t+1} \lambda_k - \sum_{k > t +1} \lambda_k = |s_{t+1}|.$$

    Similarly, if $s_t < 0$, we have that $\lambda_{t-1} = |s_{t-1}|$.

    Secondly, we have to show that $\lambda_t \neq \lambda_d$.
    Suppose that we are in the case $d = t+1$.
    Suppose further that $\lambda_t = \lambda_d$.
    Since $\lambda_t = |s_t|$,  we have that 
    $\gamma\comp{r} = \gamma\compenlever$ by Theorem~\ref{thm:algo}.
    Moreover, $\lambda_t = \sum_{k > t} \lambda_k - \sum_{k< t} \lambda_k$ and $\lambda_t = \lambda_{t+1}$ imply that $0 = \sum_{k > t+1} \lambda_k - \sum_{k < t} \lambda_k$.
    Hence, $ \gamma\compenlever = \lambda_{t+1} + \gamma(\lambda_1,\dots,\lambda_{t-1},\lambda_{t+2},\dots,\lambda_r)$ by Lemma~\ref{l:addcycles}.
    Thus $\gamma\comp{r}$ is greater than 1 since $\lambda_{t+1} > 0$, a contradiction. 
    The case where $d = t-1$ is similar.

    Finally, we have to check that $\lambda_i < |s_i|$ if $i \neq d$ and $i \neq t$.
    Suppose that $s_t > 0$, then $d = t+1$.
    If $i < t$, then we have that 
        $\lambda_t  = s_t 
                   = \sum_{k > t} \lambda_k - \sum_{k < t} \lambda_k$.
    We isolate $\lambda_i$ in this equation to obtain
    $\lambda_i = -\sum_{j < i} \lambda_j - \sum_{i+1 < j \leq t} \lambda_j + \sum_{j > t} \lambda_j$. 
    Moreover,  we have that
    \begin{align*}
        \lambda_i & = -\sum_{k<i} \lambda_k - \sum_{i+1 < k \leq t} \lambda_k + \sum_{k > t} \lambda_k + s_i - s_i\\
        & = s_i - 2 \sum_{i+1 < k \leq t} \lambda_k \\
        & < s_i
    \end{align*}
    since all $\lambda_k$'s are positive.

    Similar arguments show that if $i > t$, then $\lambda_i < |s_i|$.
    To complete the proof, we use a similar argument to show that $\lambda_i < |s_i|$ if $s_t < 0$.
\end{proof}

\begin{corollary}~\label{cor:uni}
    Let $\comp{r}$ be a circular composition. If the composition is not $(1,1)$, then it has a unique parent in the tree of circular composition. 
\end{corollary}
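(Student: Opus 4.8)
The plan is to describe the set of parents of a circular composition $\lambda = \comp{r}$ explicitly by reversing the two child operations of the tree, and then to invoke Lemmas~\ref{l:unicite1} and~\ref{l:unicite2} to show that exactly one reversal is ever available. A parent is a vertex $\mu$ of which $\lambda$ is a Type~1 or Type~2 child, so ``unique parent'' means exactly one such $\mu$ exists; I will establish both its existence and its uniqueness at once.

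First I would record two reversal criteria. A Type~1 child is $\compplus$, and the crucial point is that the $t$-th translation parameter is invariant under this operation, since $\lambda_t$ does not occur in $s_t = \sum_{j>t}\lambda_j-\sum_{j<t}\lambda_j$. Hence $\compmoins$ is a composition carrying the same $t$-th translation parameter precisely when $\lambda_t > |s_t|$, and by Lemma~\ref{l:diffcycles} it is then circular with $\lambda$ as its Type~1 child; so $\lambda$ admits a Type~1 parent at $t$ iff $\lambda_t > |s_t|$. For Type~2, let $\mu$ be obtained from $\lambda$ by deleting a part $\lambda_q$. The part inserted by a Type~2 child equals the absolute value of the translation parameter at the inserted slot, and a direct computation gives the identity $\delta^{\mu}_{q-1} = s_q$; thus a Type~2 reversal at $q$ is possible only when $\lambda_q = |s_q|$, and is valid exactly when the tree condition $\delta^{\mu}_{q-1} > \mu_q$ or $-\delta^{\mu}_{q-1} > \mu_{q-1}$ holds. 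Substituting $\delta^{\mu}_{q-1}=s_q$ and rewriting the neighbours of $\mu$ in terms of $\lambda$, this reduces to: if $s_q>0$ then $\lambda_q > \lambda_{q+1}$, and if $s_q<0$ then $\lambda_q > \lambda_{q-1}$. In the boundary cases $q=1$ and $q=r$ one disjunct is absent, but it is exactly the disjunct made false by the sign of $s_q$, so the criterion is unchanged.

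With these criteria I would split on the outcome of Lemma~\ref{l:existence}. If some position satisfies $\lambda_t > |s_t|$ strictly, then Lemma~\ref{l:unicite1} forces $\lambda_i < |s_i|$ at every other position; there is then exactly one Type~1 reversal and no equality position, hence no Type~2 reversal, giving a unique parent. Otherwise the position furnished by Lemma~\ref{l:existence} satisfies $\lambda_t=|s_t|$, and Lemma~\ref{l:unicite2} yields exactly two equality positions, which are consecutive, say $p$ and $p+1$ with $s_p>0>s_{p+1}$ and with $\lambda_p \neq \lambda_{p+1}$; the hypothesis $\lambda \neq (1,1)$ is precisely what is needed here, $(1,1)$ being the unique circular composition whose two equality parts coincide. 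Now no position has $\lambda_t>|s_t|$, so there is no Type~1 parent, while by the Type~2 criterion the reversal at $p$ is valid iff $\lambda_p>\lambda_{p+1}$ and the reversal at $p+1$ is valid iff $\lambda_{p+1}>\lambda_p$. Since $\lambda_p\neq\lambda_{p+1}$, exactly one survives, again a unique parent.

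The main obstacle is the bookkeeping inside the Type~2 criterion: correctly locating the insertion index of the deleted part, verifying the identity $\delta^{\mu}_{q-1}=s_q$, and collapsing the disjunctive tree condition into the single neighbour comparison, taking care of the extreme slots where one disjunct disappears. Once this reduction is in place, the dichotomy supplied by Lemmas~\ref{l:unicite1} and~\ref{l:unicite2} makes uniqueness essentially immediate, with the role of $\lambda\neq(1,1)$ being exactly to guarantee $\lambda_p\neq\lambda_{p+1}$ so that only one of the two symmetric Type~2 reversals is admissible.
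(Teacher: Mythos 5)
Your proof is correct and follows essentially the same route as the paper: use Lemma~\ref{l:existence} to produce a position with $\lambda_t \geq |s_t|$, then Lemma~\ref{l:unicite1} to rule out all other reversals in the strict case and Lemma~\ref{l:unicite2} in the equality case. You are in fact more explicit than the paper about the Type~2 disambiguation, reducing the tree condition to the comparison of $\lambda_p$ with $\lambda_{p+1}$ and using $\lambda_p \neq \lambda_{p+1}$ to see that exactly one of the two candidate parents is admissible, a point the paper's own proof passes over rather quickly.
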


\begin{proof}
    There exists a $t \in \set[r]$ such that $\lambda_t \geq |s_t|$ by Lemma~\ref{l:existence} and $s_t$ is positive, since the composition is circular by Lemma~\ref{l:addcycles}.
    Therefore, the composition $\comp{r}$ is a child of type 1 or type 2 of the composition $\lambda'$.

    Suppose that $\comp{r}$ is a child of type 1.
    We know that $\lambda_i < |s_i|$ for all $i \in \set[r] - \{t\}$ by Lemma~\ref{l:unicite1}.
    Thus $\comp{r}$ is not a child of type 2 and the only valid position to reduce the composition is $t$. 
    So the parent of $\comp{r}$ is $\lambda' = \compmoins$.

    Suppose that $\comp{r}$ is a child of type 2.
    We know that $\lambda_i < |s_i|$ for all $i \in \set[r] - \{t,d\}$.
    Thus $\comp{r}$ is not a child of type 1, since any $\lambda_i \leq |s_i|$.
    If $\delta_t > \lambda_{t+1}$ or $-\delta_t > \lambda_{t}$, then the parent of $\comp{r}$ is $\lambda' = \compenlever$, otherwise it is $\lambda' = (\lambda_1,\dots,\lambda_t,\lambda_{t+2},\dots, \lambda_r)$ if $\delta_{t} > 0$, or 
    $ \lambda' = (\lambda_1,\dots,\lambda_{t-2},\lambda_{t},\dots, \lambda_r)$ if $\delta_t < 0$.

\end{proof}

\begin{theorem}
	Every circular composition appears exactly once on the tree of circular composition.
\end{theorem}

\begin{proof}
    First, we show that every composition in the tree is circular.
    The \DIET \ $T_{(1,1)}$ is the permutation $(1, 2)$ which is obviously minimal.
    Let $\comp{r}$ be a composition that appears in the tree which is not associated with a circular permutation and such that all compositions appearing in previous levels of the tree are circular.
    There exists an integer $t \in \set$ such that $\lambda_t \geq |s_t|$ and 
    $$   \gamma\compmoins = \gamma\comp{r} > 1,$$
    by Theorem~\ref{thm:algo}.
    The composition $\compmoins$ appears in a previous level of the tree, which is a contradiction.
    Thus, all the compositions in the tree are circular.

    Secondly, we prove that all circular compositions appear in the tree.
    The composition $(1,1)$ is the root of the tree. 
    Let $S$ be the set of all circular compositions which are not in the tree and $\comp{r}$  the smallest composition in $S$ ordered them as in Section~\ref{sec:counting}.
    There exists a natural number $t \in \set$ such that $\lambda_t \geq |s_t|$ and 
    $$\gamma\compmoins = \gamma\comp{r} $$
    by Lemma~\ref{l:existence}. Therefore the composition $\compmoins$ cannot be an element of the tree, otherwise $\comp{r}$ is also an element of the tree contradicting the  minimality of $\comp{r}$.

    Thirdly, we show that no circular composition occurs more than once in the tree. 
    Let $\comp{r}$ be the smallest composition which appears more than once in the tree. 
    By Corollary~\ref{cor:uni}, we know that $\comp{r}$ has a unique parent $\lambda'$. Then $\comp{r}$ is the child of two distinct vertices both labelled by $\lambda'$, contradicting the minimality of $\comp{r}$. 
    Therefore, each circular composition appears only once in the tree.
\end{proof}

\section{Cyclic type}~\label{sec:type}

The \emph{cyclic type} of a \DIET \ is a partition $\ell_1^{\alpha_1}\dots\ell_k^{\alpha_k}$ where $\ell_1 > \ell_2 > \dots > \ell_k$, $\ell_i$ is the length of an orbit of $T_{\lambda}$ and $\alpha_i$ is the number of orbits of length $\ell_i$.
For example, the cyclic type of $T_{(3,5,4,2)}$ is the partition $8^13^2$. The cyclic type of symmetric discrete 2-interval exchanges is already known.

\begin{lemma}(folklore)~\label{l:folk}
    Let $(\lambda_1,\lambda_2)$ be a composition. The cyclic type of the \DIET \ $T_{\comp{r}}$ is the partition 
    $$\left(\frac{\lambda_1+\lambda_2}{\gcd(\lambda_1,\lambda_2)}\right)^{\gcd(\lambda_1,\lambda_2)}.$$
\end{lemma}

The cyclic type of \DIETtrois \ can also be described by the $\gcd$ function.

\begin{lemma}~\label{l:s_gcd}
    Let $\comptrois$ be a composition. The integers $s_1,\ s_2$ and $s_3$ are multiples of $\gcd(\lambda_1+\lambda_2,\lambda_2+\lambda_3)$.
\end{lemma}

\begin{proof}
    The result is straightforward for $s_1 = \lambda_2+\lambda_3$ and $s_3 = -\lambda_1-\lambda_2$.
    Moreover, subtracting $\lambda_2+\lambda_3$ from $\lambda_1+\lambda_2$, we obtain that $\gcd(\lambda_1+\lambda_2, \lambda_2+\lambda_3) = \gcd(\lambda_1-\lambda_3,\lambda_2+\lambda_3)$.
    Thus $s_2 = |\lambda_3-\lambda_1|$ is a multiple of $\gcd(\lambda_1+\lambda_2,\lambda_2+\lambda_3)$.
\end{proof}

Now, we can describe the number of orbits of a \DIETtrois \ using the $\gcd(\lambda_1+\lambda_2,\lambda_2+\lambda_3)$.

\begin{theorem}~\label{thm:orbit}
    The orbit of $x \in \set[n]$ in the \DIET \ $T_{\comptrois}$ is the set
    $$\left\{x + kd \, \middle\vert \, k \in \N \, \mathrm{ and }  \, \frac{1-x}{d} \leq k \leq \frac{n-x}{d}\right\},$$
    where $d = gcd(\lambda_1+\lambda_2,\lambda_2+\lambda_3)$.
\end{theorem}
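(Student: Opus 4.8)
The plan is to prove the two inclusions separately. Write $d = \gcd(\lambda_1+\lambda_2,\lambda_2+\lambda_3)$ and set $C_x = \{y \in \set[n] : y \equiv x \pmod d\}$; unwinding the bounds, $C_x$ is exactly the set displayed in the statement. For the inclusion of the orbit of $x$ in $C_x$, I would invoke Lemma~\ref{l:s_gcd}: each translation parameter $s_i$ is a multiple of $d$, and since $T_\lambda$ acts on $B_i$ by adding $s_i$, every iterate of $x$ stays in the same residue class modulo $d$. As $T_\lambda$ is a permutation of $\set[n]$, the orbit of $x$ is contained in $C_x$.

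The heart of the argument is the reverse inclusion, for which I would show that $T_\lambda$ acts transitively on each class $C_x$. Let $m = |C_x|$ and let $\omega \colon C_x \to \set[m]$ be the unique increasing bijection; conjugating $T_\lambda|_{C_x}$ by $\omega$ preserves the number of orbits, exactly as the decreasing conjugation does in the proof of Lemma~\ref{l:addcycles}. Because consecutive elements of $C_x$ differ by $d$, this conjugation turns a translation by $s_i$ into a translation by $s_i/d$. Setting $\mu_i = |C_x \cap B_i|$ and recalling $s_1 = \lambda_2+\lambda_3$, $s_2 = \lambda_3-\lambda_1$, $s_3 = -(\lambda_1+\lambda_2)$, I would check that $\omega \circ (T_\lambda|_{C_x}) \circ \omega^{-1}$ is precisely the \DIET\ $T_{(\mu_1,\mu_2,\mu_3)}$ on $\set[m]$: the blocks $B_1 < B_2 < B_3$ meet $C_x$ in that order, so $\omega$ sends $C_x \cap B_i$ to the $i$-th block of $\set[m]$, and the shifts $s_i/d$ coincide with the translation parameters $\mu_2+\mu_3$, $\mu_3-\mu_1$, $-(\mu_1+\mu_2)$ of $T_{(\mu_1,\mu_2,\mu_3)}$.

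The key numerical input is that $\mu_1+\mu_2$ and $\mu_2+\mu_3$ are independent of the chosen class. Indeed $\mu_1+\mu_2 = |C_x \cap \set[\lambda_1+\lambda_2]|$ and $\mu_2+\mu_3 = |C_x \cap \llbracket \lambda_1+1, n\rrbracket|$, and since both intervals consist of consecutive integers and have length divisible by $d$, each residue class modulo $d$ contains exactly $(\lambda_1+\lambda_2)/d$ and $(\lambda_2+\lambda_3)/d$ of their elements. Hence $\mu_1+\mu_2 = (\lambda_1+\lambda_2)/d$ and $\mu_2+\mu_3 = (\lambda_2+\lambda_3)/d$, which is also what forces the three conjugated shifts above to match the translation parameters of $T_{(\mu_1,\mu_2,\mu_3)}$. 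Consequently $\gcd(\mu_1+\mu_2,\mu_2+\mu_3)=1$, so by the criterion of Pak and Redlich~\cite{PR2008} the \DIET\ $T_{(\mu_1,\mu_2,\mu_3)}$ is minimal, i.e. has a single orbit. Therefore $T_\lambda|_{C_x}$ is a single cycle and the orbit of $x$ equals $C_x$, completing the reverse inclusion.

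I expect the main obstacle to be the bookkeeping of this reduction: verifying that $C_x\cap B_1$, $C_x\cap B_2$, $C_x\cap B_3$ really become consecutive blocks after applying $\omega$ and that the three conjugated translation parameters agree with those of $T_{(\mu_1,\mu_2,\mu_3)}$, for which the class-independence of $\mu_1+\mu_2$ and $\mu_2+\mu_3$ is exactly the right lever. One must also treat the degenerate classes where some $\mu_i=0$: there the induced map is a symmetric $2$-interval exchange (or a single fixed point when $m=1$), whose two parts still have gcd $1$ because $\gcd(a',b')=1$ with $a'=(\lambda_1+\lambda_2)/d$ and $b'=(\lambda_2+\lambda_3)/d$; it is thus minimal by the classical coprimality criterion, so the conclusion is unaffected.
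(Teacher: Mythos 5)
Your proposal is correct, and on the reverse inclusion it goes genuinely beyond what the paper writes down. The paper's own proof only establishes the containment of the orbit in the residue class: it shows by induction, using Lemma~\ref{l:s_gcd}, that every iterate $T^{(k)}_{\comptrois}(x)$ has the form $x+id$, and then reads off the bounds $\frac{1-x}{d}\leq k\leq \frac{n-x}{d}$ from the fact that all iterates lie in $\set[n]$; the claim that every element of the class is actually attained is left unargued. Your first paragraph reproduces that containment argument verbatim in spirit. Your second and third paragraphs supply the other inclusion by a renormalization the paper does not perform: restrict $T_{\lambda}$ to a residue class $C_x$, conjugate by the increasing bijection onto $\set[m]$, check that the induced map is the \DIET\ $T_{(\mu_1,\mu_2,\mu_3)}$ with $\mu_i=|C_x\cap B_i|$ and that $\mu_1+\mu_2=(\lambda_1+\lambda_2)/d$ and $\mu_2+\mu_3=(\lambda_2+\lambda_3)/d$ are coprime, and invoke the Pak--Redlich criterion~\cite{PR2008} to conclude that the induced map is minimal, hence that $T_{\lambda}$ is transitive on $C_x$. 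The bookkeeping is sound: the blocks meet $C_x$ in order, consecutive elements of $C_x$ differ by exactly $d$ so each shift $s_i$ becomes $s_i/d=$ the corresponding translation parameter of $T_{(\mu_1,\mu_2,\mu_3)}$, and your handling of degenerate classes with some $\mu_i=0$ reduces correctly to the coprime $2$-interval case. What your route buys is a complete two-sided proof, at the cost of importing the length-$3$ minimality criterion as an external ingredient; the paper's route is shorter but, as written, proves only one inclusion.
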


For example, the cyclic decomposition of the \DIET of $T_{(9,1,4)}$ is $(1,6,11)(2,7,12)(3,8,14)(4,9,14)(5,10)$. Therefore, the orbit of $9$ is $\{9 + 5k | -8/5 \leq k \leq 5/5\} = \{4,9,14\}$ .

\begin{proof}
    Let $d$ be the $\gcd(\lambda_1+\lambda_2, \lambda_2+\lambda_3)$ and $x$ be an integer in $\set[n]$.
    First, we show by induction that $T^{(k)}_{\comptrois}(x) = x + id$. We know that $T^0_{\comptrois}(x) = x + 0\cdot d$ and $T^1_{\comptrois}(x) = x + s_i = x + id$ since $s_i$ is a multiple of $d$ as shown in Lemma~\ref{l:s_gcd}.
    Let computes $$T^{(k+1)}_{\comptrois}(x) = T(T^{k}_{\comptrois}(x)) = T(x+id) = x + id + s_i$$ by induction. Moreover, $s_i$ is a multiple of $d$ by Lemma~\ref{l:s_gcd}. Hence, we have that $T^{(k+1)}_{\comptrois}(x) = x+ jd$.   

    By the definition of $T_{\comptrois}$, we have that $1 \leq x +kd \leq n$ which gives that $(1-x)/d \leq k \leq (n-x)/d$.
\end{proof}

\begin{corollary}~\label{cor:cycle_type_3}
    Let $\comptrois$ be a composition of $n = \lambda_1 + \lambda_2 + \lambda_3$. Take $d = \gcd(\lambda_1 + \lambda_2, \lambda_2 +\lambda_3)$. 
    Let $q$ and $r$ be integers with $0 \leq r < d $ such that $n = qd + r$.  
    The cyclic type of  $T_{\comptrois}$ is $q^{d}$ if $r = 0$ and $(q+1)^r q^{d-r}$ otherwise. Moreover $d = \gamma(T_{\comptrois})$.
\end{corollary}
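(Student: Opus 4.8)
The plan is to read everything off Theorem~\ref{thm:orbit}. That theorem identifies the orbit of $x \in \set[n]$ with the set of integers of the form $x + kd$ lying in $\set[n]$, which is precisely the set of integers in $\set[n]$ congruent to $x$ modulo $d$. Thus the orbits of $T_{\comptrois}$ are exactly the nonempty intersections of the residue classes modulo $d$ with the interval $\set[n]$, and the whole corollary reduces to counting these classes and their sizes.

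First I would check that all $d$ residue classes actually occur. Since $d = \gcd(\lambda_1+\lambda_2,\lambda_2+\lambda_3)$ divides $\lambda_1+\lambda_2$ and $\lambda_3 \geq 1$, we have $d \leq \lambda_1 + \lambda_2 < n$, so $n \geq d$ and each of the $d$ residue classes modulo $d$ meets $\set[n]$. Hence there are exactly $d$ orbits, which already yields $\gamma(T_{\comptrois}) = d$.

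Next I would determine the orbit lengths by a division-with-remainder argument. Writing $n = qd + r$ with $0 \leq r < d$, the number of integers in $\set[n]$ in a fixed residue class—say with representative $j \in \set[d]$—is $\lfloor (n-j)/d \rfloor + 1$. Substituting $n = qd + r$ gives the value $q+1$ when $1 \leq j \leq r$ and the value $q$ when $r < j \leq d$. Therefore exactly $r$ orbits have length $q+1$ and exactly $d-r$ orbits have length $q$.

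Finally I would translate this into the cyclic-type notation. If $r = 0$, every orbit has length $q$ and the cyclic type is $q^d$. If $r > 0$, then $q+1 > q \geq 1$ (the inequality $q \geq 1$ again following from $n \geq d$), so the two distinct orbit lengths appear in strictly decreasing order and the cyclic type is $(q+1)^r q^{d-r}$, as claimed. The argument is essentially elementary once Theorem~\ref{thm:orbit} is in hand; the only points that require care are verifying $n \geq d$ so that precisely $d$ orbits occur, and keeping the residue-class count consistent both with the $r = 0$ edge case and with the decreasing-length convention built into the definition of cyclic type.
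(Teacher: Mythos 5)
Your proof is correct and follows essentially the same route as the paper: both read off from Theorem~\ref{thm:orbit} that the orbits are exactly the residue classes modulo $d$ intersected with $\llbracket 1,n\rrbracket$, count $d$ of them, and compute their sizes by division with remainder. Your write-up is in fact a bit more careful than the paper's (you verify $n\geq d$ so that all $d$ classes are nonempty, and you state the size count $\lfloor (n-j)/d\rfloor+1$ cleanly, whereas the paper's final sentence has the two cases swapped).
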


\begin{proof}
    By Theorem~\ref{thm:orbit}, we have that the orbit of $x$ is the set
    $$\left\{x + kd \, \middle\vert \, k \in \N \, \mathrm{ and }  \, \frac{1-x}{d} \leq k \leq \frac{n-x}{d}\right\}.$$ Therefore, every element in the orbit of $x$ is congruent to $x$ modulo $d$. Hence, the number of orbit is the number of classes modulo $d$, which is exactly $d$. The cardinality of $\gamma(x)$ is $[n/d]$ if $x \bmod d < r$ or $[n/d] +1$ if $x \bmod d \geq r$.
\end{proof}

The \DIET s $T_\lambda$ where $\lambda$ is a composition of length 2 or 3 has simple cyclic type.
For symmetric discrete interval exchanges of length 4 or more, no such description is known. As a matter of fact, it does not seem to have any relation between the length of the orbits in the cyclic type. However computer testing suggests that if $\ell_1^{\alpha_1}\dots\ell_k^{\alpha^k}$ is the cyclic type of $T_{\comp{r}}$, then $k \leq \lceil \frac{r-1}{2}\rceil$. 

\textbf{Acknowledgement.} I would like to thank Christophe Reutenauer, Alexandre Blondin Mass\'e, Vincent Delecroix and Michael Rao for their suggestions and helpful comments. This work was supported by NSERC graduate scholarship.

\nocite{*}
\bibliographystyle{plain}
\bibliography{bibliographie}
\label{sec:biblio}

\end{document}